\documentclass[12pt]{amsart}
\usepackage{amsfonts, amsmath, amstext, amssymb, amsthm}
\usepackage{tikz}
\usepackage{tikz-cd}
\usetikzlibrary{decorations.pathreplacing}
\usetikzlibrary{calc,tikzmark}
\usepackage{latexsym}
\usepackage{url}
\usepackage[margin = 1in]{geometry}
\usepackage{amsmath}
\usepackage{amssymb}
\usepackage{amsfonts}
\usepackage{amsthm}
\usepackage{mathtools}
\usepackage{enumerate}

\newcommand{\qq}{\mathfrak{q}}
\newcommand{\mm}{\mathfrak{m}}
\newcommand{\NN}{\mathbb{N}}

\newcommand{\OO}{\mathcal{O}}
\newcommand{\RR}{\mathbb{R}}
\newcommand{\Aa}{\mathbb{A}}
\newcommand{\QQ}{\mathbb{Q}}
\newcommand{\ZZ}{\mathbb{Z}}
\newcommand{\FF}{\mathbb{F}}
\newcommand{\pp}{\mathfrak{p}}

\newcommand{\PP}{\mathbb{P}}

\newcommand{\Pp}{\mathfrak{P}}

\DeclareMathOperator{\red}{red}
\DeclareMathOperator{\Gal}{Gal}
\DeclareMathOperator{\Cl}{Cl}
\DeclareMathOperator{\ch}{char}

\DeclareMathOperator{\GL}{GL}

\DeclareMathOperator{\Nrd}{Nrd}
\DeclareMathOperator{\Trd}{Trd}

\newtheorem{thm}{Theorem}[section]
\newtheorem{prop}[thm]{Proposition}
\newtheorem{lem}[thm]{Lemma}
\newtheorem{cor}[thm]{Corollary}
\theoremstyle{definition}
\newtheorem{defn}[thm]{Definition}
\newtheorem{rmk}[thm]{Remark}
\newtheorem*{rmk*}{Remark}
\newtheorem*{ex*}{Example}

\title[Nonnorms are diophantine]{Diophantine definability of nonnorms of cyclic extensions of
  global fields}

\author{Travis Morrison}
\address{Institute for Quantum Computing \\ The University of Waterloo
  \\ 20 University Ave West \\ Waterloo, Ontario N2L 3G1, Canada}
  \email{travis.morrison@uwaterloo.ca}

\thanks{The author was partially supported by National
    Science Foundation grants DMS-1056703 and CNS-1617802.}  
\begin{document}

\begin{abstract}
  We show that for any square-free natural number $n$ and any global
  field $K$ with $(\ch(K), n)=1$ containing a primitive $n$th root of unity,
  the pairs $(x,y)\in K^{\times}\times K^{\times}$ such that $x$ is not a relative norm of
  $K(\sqrt[n]{y})/K$ form a diophantine set over $K$. We use the Hasse
  norm theorem, Kummer theory, and class field theory to prove this
  result. We also prove that for any $n\in \NN$ and any global field $K$ with
  $\ch(K)\neq n$, $K^{\times}\setminus K^{\times n}$ is diophantine over
  $K$. For a number field $K$, this is a result of Colliot-Th\'el\`ene and
  Van Geel, proved using results on the Brauer-Manin obstruction.
 Additionally, we prove a variation of our main theorem for
  global fields $K$ without the $n$th roots of unity, where we
  parametrize varieties arising from norm forms of cyclic extensions
  of $K$ without any rational points by a diophantine set.
\end{abstract}
\maketitle
\section{Introduction}

The diophantine subsets of a field $K$ which is not algebraically
closed are the subsets which are
defined by a positive-existential formula. 

\begin{defn}
  Let $R$ be a commutative domain. A set $A\subset R^n$ is diophantine
  over $R$ if there exists $m\in \NN$ and a polynomial
  $f\in R[x_1,\ldots,x_m,y_1,\ldots,y_n]$ such that
\[
  A =\{(a_1,\ldots,a_n): \exists r_1,\ldots,r_m\in R \text{ such that }
  f(r_1,\ldots,r_m,a_1,\ldots,a_n)=0\}.
 \]
 \end{defn}
 More geometrically, a
subset $A\subseteq K^n$ is diophantine over $K$ if there is an affine
algebraic set $X/K$ and a $K$-morphism $X\to \Aa^n_K$ such that $A$ is the image of $X(K)$. 

 Diophantine sets feature prominently in decidability and definability
 in number theory. Hilbert's Tenth Problem, abbreviated H10, asked
 whether there is an algorithm which takes as input an arbitrary
 polynomial equation with coefficients in $\ZZ$ and outputs YES if
 that equation has a solution over $\ZZ$ and NO
 otherwise. In~\cite{Mat70}, Matiyasevich, building on the work of
 Davis, Putnam and Robinson~\cite{DPR61}, proves such an algorithm
 cannot exist, i.e,
 H10 is undecidable. This is a consequence of the ``DPRM'' theorem: the
 diophantine subsets of $\ZZ$ are precisely the recursively enumerable
 subsets of $\ZZ$. We can then ask if such an algorithm exists for
 other rings $R$ by replacing $\ZZ$ with $R$. Over $\QQ$, H10 is still
 open. If one could show that $\ZZ$ were diophantine over $\QQ$, then
 a standard argument reduces H10 over $\ZZ$ to H10 over $\QQ$,
 implying that H10 over $\QQ$ would be undecidable as
 well. The DPRM theorem resolved Hilbert's tenth problem by
 classifying the diophantine subsets of $\ZZ$, which suggests that
 understanding the sets which are diophantine over a global field $K$
 sheds some light on the difficulty of solving diophantine equations
 over $K$. 

Obstructions to the existence of rational points on a variety can be used to
produce diophantine
definitions of sets. Poonen, in \cite{Poo09b}, uses
results on the Brauer-Manin obstruction to show that the non-squares
of a global field $K$  of characteristic not $2$ are diophantine over
$K$. With similar methods, in \cite{VAV2012}, 
V\'arilly-Alvarado and Viray show that, assuming Schinzel's hypothesis, for any natural
number $n$ and number field $K$, the set of non-$n$th-powers of $K$
is diophantine over $K$.  
Colliot-Th\'{e}l\`{e}ne and Van Geel unconditionally prove this result
in \cite{CTG15}. This is further generalized in~\cite{Dit16}, where
Dittman shows that the irreducibility of polynomials over a global
field is diophantine. 

Because H10 is undecidable over $\ZZ$, a first-order definition of $\ZZ$ in $\QQ$ shows that the full first-order theory of $\QQ$ is undecidable. Robinson gave a first-order definition of $\ZZ$ in $\QQ$, showing that it has a $\forall\exists\forall\exists$ definition in~\cite{Rob49}, and Poonen improved on this in~\cite{Poo09}, showing that $\ZZ$ has a $\forall\exists$ definition in $\QQ$. Koenigsmann proves that $\QQ\setminus \ZZ$ is diophantine over $\QQ$ and moreover that this implies that $\ZZ$ has a $\forall$ definition in $\QQ$. 
Using similar methods,  Koenigsmann gives a new proof that $\QQ^{\times}\setminus
\QQ^{\times 2}$ is diophantine over $\QQ$, and also shows that the set  
\[
\{(x,y): x\text{ is not a norm of } \QQ(\sqrt{y})\}
\]
is diophantine over $\QQ$. Park shows that for a number field $K$,
$\OO_K$ has a first-order universal definition. In \cite{EM17}, these
results are generalized further: for a global field $K$ with
$\ch(K)\not=2$ and $S$ a finite, nonempty set of primes of $K$, the sets
$K\setminus \OO_S$, $K\setminus K^{\times 2}$, and
\[
\{(x,y): x\text{ is not a norm of } K(\sqrt{y})\}
\]
 are shown to
be diophantine over $K$.

In this paper, we prove the
following theorem:
\begin{thm}\label{thm:nonnormprime}
  Let $\ell$ be an odd prime and suppose that $K$ is a global field
  with $\ch(K)\not=\ell$. Further suppose that $K$ contains a
  primitive $\ell$th root of unity. Then
\[
\{(x,y)\in K^{\times}\times K^{\times}: x \text{ is not a norm of } K(\sqrt[\ell]{y}) \}
\]
is diophantine over $K$.
\end{thm}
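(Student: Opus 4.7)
The plan is to reformulate non-normhood as non-splitting of a cyclic algebra and exhibit diophantine witnesses by a controlled twisting argument. By the Hasse norm theorem applied to the cyclic degree-$\ell$ extension $L := K(\sqrt[\ell]{y})/K$, the element $x \in K^*$ is a norm from $L$ iff $x$ is a local norm at every place of $K$; equivalently, the cyclic algebra $A := (y,x)_\ell$ is non-trivial in $\mathrm{Br}(K)[\ell]$. I would first reduce to the case $y \notin K^{*\ell}$ by invoking the companion theorem of this paper that $K^*\setminus K^{*\ell}$ is diophantine over $K$ (generalizing \cite{CTG15}); otherwise $L = K$ and every $x$ is automatically a norm.

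Since $K^*/N_{L/K}(L^*) \cong \ZZ/\ell\ZZ$ when $L \ne K$, an element $x$ is a non-norm iff there exist some non-norm $z \in K^*$ and some $k \in \{1,\ldots,\ell-1\}$ with $xz^k \in N_{L/K}(L^*)$; the condition $xz^k \in N_{L/K}(L^*)$ is directly diophantine via the norm form of $L/K$, and the disjunction over the finitely many values of $k$ is harmless. The crux is thus to produce, diophantinely in $y$, a ``certified'' non-norm $z$. Because $\mu_\ell \subset K$ with $\ell$ odd, $K$ has no real places and only finite places can contribute to $[A]\in \mathrm{Br}(K)$. At a finite place $v\nmid \ell$, the tame Hilbert symbol $(y,z)_{\ell,v}$ is an explicit power-residue expression in the valuations and residues of $y$ and $z$, so non-triviality of $(y,z)_{\ell,v}$ can be forced by arranging $z$ to have prescribed valuation-and-residue behavior at $v$. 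Following the strategy of Koenigsmann and its $\ell=2$ global extension in \cite{EM17}, such behavior is to be packaged into diophantine conditions built from sets already known to be diophantine (notably $K^*\setminus K^{*\ell}$ and norms from auxiliary cyclic sub-extensions produced by Kummer theory), and Chebotarev density together with weak approximation supplies such $z$ whenever the target Brauer class is globally non-trivial.

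The main obstacle will be handling the wild places $v\mid \ell$, where the local norm group is not cut out by valuations and residues alone and the Hilbert symbol involves higher unit filtrations. My expectation, in line with \cite{EM17}, is to bypass the wild case by a reciprocity trick: arrange the witness $z$ so that $(y,z)_{\ell,v}$ is non-trivial at a carefully chosen tame place disjoint from the support of $y$, and exploit the product formula $\prod_v (y,z)_{\ell,v}=1$ to transfer information to and from wild places indirectly through class field theory. The combinatorial verification that the cosets of $N_{L/K}(L^*)$ realized by such controlled witnesses cover every non-norm coset, uniformly over $(x,y)$, will be the technical heart of the proof.
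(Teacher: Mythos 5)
Your high-level setup (Hasse norm theorem, reinterpretation via Hilbert symbols and splitting of degree-$\ell$ cyclic algebras, the intent to encode local conditions with diophantine semi-local rings and to use Chebotarev/weak approximation to build witnesses) matches the paper's. But there are several genuine problems with the plan as written.

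First, the reduction to $y\notin K^{*\ell}$ by ``invoking the companion theorem that $K^*\setminus K^{*\ell}$ is diophantine'' is circular in the scope of this paper: over number fields that fact is due to Colliot-Th\'el\`ene--Van Geel, but over global function fields it is exactly Corollary~\ref{cor:nonnthpowersdio}, which is \emph{deduced from} Theorem~\ref{thm:nonnormprime}. The reduction is also unnecessary: if $y\in K^{*\ell}$ then $(x,y)_{K_\pp,\ell}=1$ for all $\pp$, so the local-nontriviality condition automatically rules out this case, and the paper never needs the reduction.

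Second, the claim ``$K^*/N_{L/K}(L^*)\cong\ZZ/\ell\ZZ$ when $L\neq K$'' is false (this index is $\ell$ for the id\`ele class group, but $K^*/N_{L/K}(L^*)$ is an infinite elementary abelian $\ell$-group; already for $\QQ(i)/\QQ$ the primes $\equiv 3\bmod 4$ give independent classes). Fortunately your subsequent biconditional --- $x$ is a non-norm iff there exist a non-norm $z$ and $1\leq k\leq \ell-1$ with $xz^k\in N_{L/K}(L^*)$ --- is still correct (forward: $z=x$, $k=\ell-1$; backward: exponent $\ell$ of the quotient and $\gcd(k,\ell)=1$), so the false claim is benign, but it should not appear as a stated fact.

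Third and most importantly, the ``reciprocity trick'' for wild places does not resolve the obstruction you identify. If the class $(y,x)$ in $\mathrm{Br}(K)[\ell]$ is ramified \emph{only} at places above $\ell$ (which is possible when $K$ has several primes over $\ell$, since the local invariants need only sum to zero), then any $z$ with $xz^k\in N_{L/K}(L^*)$ for $\gcd(k,\ell)=1$ forces $(y,z)$ to be a prime-to-$\ell$ power of $(y,x)$, hence to have the \emph{same} ramification locus --- you cannot twist a purely wild obstruction into a tame one. The paper sidesteps this entirely: since the modulus $\mm$ contains all primes over $(\ell)$, the ramified/wild places form a fixed finite set, and Lemma~\ref{lem:hilbertdio} shows directly that for each fixed $\pp\mid\mm$ the condition $(x,y)_{K_\pp,\ell}\neq 1$ is diophantine, giving a finite disjunction that covers all wild obstructions with no twisting at all.

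Finally, the real technical content of the paper --- fixing auxiliary $a,b$ as in Proposition~\ref{fixab}, partitioning unramified primes by their Frobenius in $K(\sqrt[\ell]{a},\sqrt[\ell]{b})$ into four classes $C_{(\pm1,\pm1)}$, building the diophantine semi-local rings $T_{a,b}$ from norm/trace forms of cyclic algebras (Proposition~\ref{prop:Tabdio}, following Dittmann), extracting their Jacobson radicals $J_{a,b}$ (Lemma~\ref{Jabdio}), and then producing witnesses $p\in\Phi_{(i,j)}$ or $(p,q)\in\Psi$ whose associated semi-local rings single out a designated prime (Lemmas~\ref{not1,1} and~\ref{1,1}) --- is precisely the ``combinatorial verification'' you defer. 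The twisting step does not reduce this work; it only relocates it, since certifying non-normhood of $z$ is exactly as hard as certifying it for $x$ unless $z$ is constrained by this machinery. So while the proposal correctly names the relevant tools, it misstates one group-theoretic fact, introduces an avoidable circularity, proposes a fix for wild places that does not work, and leaves unaddressed the part of the argument that is actually the proof.
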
 
We prove Theorem \ref{thm:nonnormprime} in Section
\ref{sec:mainthm}. We use the Hasse norm theorem, which says that
$x\in K$ is a relative norm of a cyclic extension $L/K$ if and only if
it is a relative local norm in every completion of $L/K$.  For a fixed
prime of $K$, whether $x$ is a relative norm of
$K_{\pp}(\sqrt[\ell]{y})$ is 
controlled by diophantine local conditions on $x$ and $y$. This is not enough to
prove Theorem~\ref{thm:nonnormprime}: while finite unions of
diophantine sets are diophantine, infinite unions do not have to be
diophantine. To reduce from infinitely many to finitely many
conditions on $x$ and $y$, we group the primes of $K$ into finitely
many ray classes for an abelian extension $L/K$, and use Kummer theory
and class field theory to relate the splitting of primes in these
classes to the Hilbert symbol.

Next, we show that for each union of ray classes $C$, the set
\[
\{(x,y)\in K^{\times}\times K^{\times}: \exists \pp \in C \text{ such
  that } x \text{ is not a norm of } K_{\pp}(\sqrt[\ell]{y})\}
\]
is diophantine over $K$. Taking the union over these subsets of
$K^{\times}\times K^{\times}$ gives our diophantine definition of
$\{(x,y): x \text{ is not a norm of } K(\sqrt[\ell]{y})\}$.  Our approach to this is as follows.  We encode the local conditions
using certain semi-local subrings of $K$, defined by the norm and
trace forms of cyclic algebras. These semi-local rings are diophantine
by a result of~\cite{Dit16}. This is an extension of ideas in
\cite{Eis05,Poo09,Koe13,Park,EM17}, where quaternion algebras are used
to produce diophantine definitions of semi-local rings.  We define a
finite number of families of diophantine semi-local rings, where each
family depends on one or two parameters of $K$. Then we parametrize
these families by sets which are diophantine over $K$ and which ensure
that the primes of any semi-local ring in a family are all in the same
union of ray classes. 

As a
corollary to Theorem~\ref{thm:nonnormprime}, we obtain
\begin{cor}\label{cor:nonnormsqfree}
  Suppose $n>1$ is a square-free integer, $K$ is a global field
  containing a primitive $n$th root of
  unity, and assume
  $(\ch(K),n)=1$ if $\ch(K)>0$.  Then
\[
\{(x,y): x \text{ is not a norm of } K(\sqrt[n]{y}) \}
\]
is diophantine over $K$.
\end{cor}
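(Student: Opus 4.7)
The plan is to reduce Corollary \ref{cor:nonnormsqfree} to Theorem \ref{thm:nonnormprime} by factoring $n = \ell_1 \cdots \ell_k$ into distinct primes and establishing the following key claim: for $y \in K^*$ and $x \in K^*$, $x$ is a norm of $K(\sqrt[n]{y})/K$ if and only if $x$ is a norm of $K(\sqrt[\ell_i]{y})/K$ for every $i$. Granting this, the nonnorm set decomposes as the finite union $\bigcup_{i=1}^{k} \{(x,y)\in K^*\times K^* : x \text{ is not a norm of } K(\sqrt[\ell_i]{y})\}$. Each summand with $\ell_i$ odd is diophantine by Theorem \ref{thm:nonnormprime}, and each summand with $\ell_i = 2$ (if present) is diophantine by \cite{EM17}. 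Since a finite union of diophantine sets is diophantine (one can multiply defining polynomials), this yields the corollary.

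To prove the key claim, the forward implication follows from transitivity of the norm along the tower $K \subset K(\sqrt[\ell_i]{y}) \subset K(\sqrt[n]{y})$. For the converse, since $K$ contains $\zeta_n$, Kummer theory shows that $K(\sqrt[n]{y})/K$ is cyclic of squarefree degree $d$, where $d$ is the order of $y$ in $K^*/K^{*n}$, and each $K(\sqrt[\ell_i]{y})/K$ is cyclic of degree $1$ or $\ell_i$. Because all the relevant extensions are cyclic, the Hasse norm theorem applies and reduces the statement to its local analogue at each place $v$ of $K$: if $x$ is a local norm from every $K_v(\sqrt[\ell_i]{y})$, then $x$ is a local norm from $K_v(\sqrt[n]{y})$.

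For the local analogue I would use local class field theory. Let $d_v$ denote the order of $y$ in $K_v^*/K_v^{*n}$, so that $K_v(\sqrt[n]{y})/K_v$ is cyclic of degree $d_v$ and $K_v(\sqrt[\ell]{y})/K_v$ is trivial when $\ell \nmid d_v$ (in which case the norm condition is automatic) and of degree $\ell$ when $\ell \mid d_v$. Local reciprocity identifies $K_v^*/N_{K_v(\sqrt[n]{y})/K_v}(K_v(\sqrt[n]{y})^*)$ with the cyclic group $\ZZ/d_v\ZZ$, and since $d_v$ is squarefree the Chinese remainder theorem decomposes this as $\prod_{\ell \mid d_v} \ZZ/\ell\ZZ$; the projection onto the $\ell$-factor corresponds via Galois correspondence to the unique degree-$\ell$ subfield of $K_v(\sqrt[n]{y})$, namely $K_v(\sqrt[\ell]{y})$. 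Hence $x$ is a local norm from $K_v(\sqrt[n]{y})$ iff its image vanishes in every factor, iff it is a local norm from each $K_v(\sqrt[\ell]{y})$. The main thing to check carefully is this local compatibility across all places, including archimedean ones, but local class field theory treats both cases uniformly and once the degrees $d_v$ are tracked correctly the argument is routine.
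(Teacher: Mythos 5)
Your proposal is correct and follows essentially the same route as the paper: decompose $n$ into distinct primes, reduce via the Hasse norm theorem for the cyclic extensions $K(\sqrt[n]{y})$ and $K(\sqrt[\ell_i]{y})$ to a local statement at each place, and then use local class field theory to show that the local norm group of $K_v(\sqrt[n]{y})$ is the intersection of the local norm groups of the $K_v(\sqrt[\ell_i]{y})$. The paper simply invokes the standard local CFT fact that the norm group of a compositum of abelian extensions is the intersection of the norm groups, whereas you unwind that fact via the CRT decomposition of $K_v^*/N \cong \ZZ/d_v\ZZ$; these are the same argument at different levels of explicitness, and both correctly defer the $\ell_i = 2$ case to \cite{EM17}.
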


As an additional corollary to Theorem~\ref{thm:nonnormprime}, we prove
\begin{cor}\label{cor:nonnthpowersdio}
Suppose $n>1$ is a square-free integer. Let $K$ be a global field such that
  $(\ch(K),n)=1$ if $\ch(K)>0$. Then
$K^{\times }\setminus K^{\times  n}$ is diophantine over $K$. 
\end{cor}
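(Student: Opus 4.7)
My plan is to bootstrap Theorem \ref{thm:nonnormprime} via a three-step reduction: from a general exponent $n$ to a prime exponent $p$, then using Theorem \ref{thm:nonnormprime} to handle the case $\zeta_p \in K$, and finally descending from $K(\zeta_p)$ to $K$ to remove the root-of-unity hypothesis. For the reduction to prime exponents, a Bezout identity gives $K^{*n} = \bigcap_{p \mid n} K^{*p^{v_p(n)}}$, so $K^* \setminus K^{*n}$ is the finite union of the sets $K^* \setminus K^{*p^{v_p(n)}}$ with $p \mid n$. For a fixed prime power $p^a$, I would decompose
\[
K^* \setminus K^{*p^a} = \bigsqcup_{0 \le k < a}(K^{*p^k} \setminus K^{*p^{k+1}})
\]
and observe that, for any $y = z^{p^k}$, membership of $y$ in $K^{*p^{k+1}}$ amounts to $\zeta z \in K^{*p}$ for some $\zeta$ in the finite group $\mu_{p^k}(K)$, since any two $p^k$-th roots of $y$ in $K$ differ by an element of $\mu_{p^k}(K)$. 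Each stratum is then carved out by an existential in $z$ together with a finite conjunction of clauses of the form ``$\zeta z \notin K^{*p}$'', so by the standard closure properties of diophantine sets over a field the problem reduces to showing $K^* \setminus K^{*p}$ is diophantine for each prime $p$ coprime to $\ch(K)$.

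For an odd prime $p$ with $\zeta_p \in K$, Theorem \ref{thm:nonnormprime} makes $\{(x,y) \in K^* \times K^* : x \text{ is not a norm of } K(\sqrt[p]{y})/K\}$ diophantine, so its projection onto the $y$-coordinate is as well. This projection is exactly $K^* \setminus K^{*p}$: if $y \in K^{*p}$ then $K(\sqrt[p]{y})/K$ is trivial and every $x \in K^*$ is a norm, whereas if $y \in K^* \setminus K^{*p}$ the extension is cyclic of degree $p$, so by Chebotarev there exists a non-split place and by the Hasse norm theorem the global norm group is a proper subgroup of $K^*$, exhibiting non-norms. The case $p = 2$ (where $\zeta_2 = -1 \in K$ is automatic when $\ch(K) \neq 2$) is covered by the existing diophantine definitions of $K^* \setminus K^{*2}$ for global fields of characteristic $\neq 2$ due to Poonen \cite{Poo09b} and \cite{EM17}.

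To drop the assumption $\zeta_p \in K$ for an odd prime $p$, I would set $K' := K(\zeta_p)$, whose degree $d := [K':K]$ divides $p - 1$ and is therefore coprime to $p$. A short computation using $N_{K'/K}$ together with a Bezout relation $ud + vp = 1$ shows $K^* \cap K'^{*p} = K^{*p}$, so for $y \in K^*$ the conditions $y \notin K^{*p}$ and $y \notin K'^{*p}$ are equivalent. The previous paragraph applied over $K'$ provides a $K'$-diophantine definition of $K'^* \setminus K'^{*p}$, which I would pull back to $K$ by fixing a $K$-basis $\omega_1, \ldots, \omega_d$ of $K'$, writing each $K'$-variable as $\sum_j r_j \omega_j$ with $r_j \in K$, and equating the $\omega_j$-coefficients of each defining equation. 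The main technical difficulty is this descent: one must package the resulting finite system of $K$-polynomial equations into a single diophantine definition over $K$, a standard matter of closure of diophantine sets over fields under finite intersection that nonetheless demands some care in positive characteristic, where the sum-of-squares trick for combining equations is unavailable.
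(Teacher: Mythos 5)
Your proof is correct and follows the same overall strategy as the paper's: reduce to the case of a prime exponent $p$ with $\mu_p \subseteq K$, then realize $K^* \setminus K^{*p}$ as a coordinate projection of the non-norm set from Theorem \ref{thm:nonnormprime}. The paper simply cites \cite{VAV2012,CTG15} for the reduction, whereas you spell it out (Bezout for distinct primes, stratification by $p$-depth for prime powers, descent from $K(\zeta_p)$ via $N_{K(\zeta_p)/K}$); both routes are fine, and your stratification and the identity $K^* \cap K(\zeta_p)^{*p} = K^{*p}$ are correct. One genuine, if small, difference: you project $D$ onto the $y$-coordinate, arguing directly that a proper cyclic extension has a proper norm group, whereas the paper projects onto the $x$-coordinate using the non-degeneracy of the local Hilbert symbol together with weak approximation. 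Either projection works; yours is arguably the more elementary of the two. Finally, your caution about combining a finite system of $K$-equations into one ``in positive characteristic, where the sum-of-squares trick is unavailable'' is unnecessary: over any field that is not algebraically closed (in particular any global field, of any characteristic) one fixes an irreducible polynomial $p(t)$ of degree $d \ge 2$ and uses its homogenization $h(u,v) = v^d p(u/v)$, whose only zero in $K^2$ is the origin, and iterates; this gives closure of diophantine sets under finite intersection uniformly in all characteristics, so the Weil-restriction step goes through without any special care.
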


Thus we recover the result of~\cite{CTG15} in the case that $K$ is a
number field, and extend it to the case that $K$ is a global function
field with $(\ch(K),n)=1$.  We replace the use of the Brauer-Manin
obstruction with class field theory to
prove Corollary~\ref{cor:nonnthpowersdio}.  


Finally, we reinterpret Corollary~\ref{cor:nonnormsqfree} in order to
remove the assumption that $K$ contains a primitive root of unity in
our results on non-norms. Suppose $K$ is a global field, $n>1$ is a
square-free number, and $(\ch(K),n)=1$ if $\ch(K)>0$.  Let $d:=\binom{2n-1}{n}$ be
the dimension of the space of homogeneous polynomials of degree $n$ in
$n$ variables over $K$. For a cyclic extension $L/K$ of degree $n$,
the norm form is such a polynomial. Given a vector
$\vec{a}=(a_1,\ldots,a_d)\in K^d$, denote by $f_{\vec{a}}$ the
homogeneous polynomial of degree $n$ in the $n$ variables
$t_1,\ldots,t_n$ whose $i$th coefficient, using the lexicographical
ordering, is $a_i$. In Section~\ref{sec:norootsof1}, we prove the
following:

\begin{thm}\label{thm:norootsof1}
Suppose $n>1$ is square-free and $K$ is a global field with $(\ch(K),n)=1$ if $\ch(K)>0$. The set 
\begin{align*}
D(n,K) :=\{(x,\vec{a})\in K^{\times }\times K^d \colon 
&f_{\vec{a}} \text{ is the norm form of a degree } n \text{ cyclic extension of } K \\
&\text{ and } f_{\vec{a}}(t_1,\ldots,t_n) = x \text{ has no solutions in } K^n \}
\end{align*}
is diophantine over $K$. 
\end{thm}

\section{Class Field Theory, the Hilbert Symbol, 
and Cyclic Algebras}
For a number field $K$, a {\em finite prime} $\pp$ of $K$ is a maximal
ideal in $\OO_K$, the ring of integers of $K$, and an {\em infinite prime} of
$K$ is an equivalence class of Archimedean absolute values. If $K$ is
a global function field, a {\em finite prime} $\pp$ is the maximal
ideal of a local ring in $K$. If $\pp$ is a finite prime of a global
field $K$, let $v_{\pp}:K\to \ZZ\cup\{\infty\}$ be the associated
normalized valuation. For a global field $K$ and a finite prime $\pp$,
a local ring associated to $\pp$ is denoted by
$\OO_{\pp}:=\{x\in K: v_{\pp}(x)\geq 0\}$. A {\em semi-local ring} in
$K$ is a finite intersection of local rings of $K$. Local rings of
global fields are diophantine by the following lemma, first proved
in~\cite{Shl94}:
\begin{lem}\label{Opdio}
  Let $K$ be a global field and let $\OO_{\pp}\subseteq K$ be a
  local ring in $K$. Then $\OO_{\pp}$ is diophantine over $K$.
\end{lem}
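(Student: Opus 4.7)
The plan is to describe $\OO_{(\pp)}$ diophantinely using images of norm forms of quadratic extensions of $K$ and the Hasse norm theorem for quadratic extensions, which reduces global norm representability to local norm conditions at every place.

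For any $a \in K^*$, the image $N_a := \{u^2 - av^2 : u,v \in K\}$ of the polynomial map $(u,v) \mapsto u^2 - av^2$ is diophantine over $K$. When $a \notin K^{*2}$, $N_a \setminus \{0\}$ coincides with $N_{K(\sqrt{a})/K}(K(\sqrt{a})^*)$, so by the Hasse norm theorem a nonzero $x$ lies in $N_a$ iff the Hilbert symbol $(a,x)_\qq = 1$ at every prime $\qq$ of $K$. Using weak approximation together with Chebotarev's density theorem (or a direct class field theoretic construction), I would produce an $a \in K^*$ that is a unit non-square at $\pp$, so $K_\pp(\sqrt{a})/K_\pp$ is the unramified quadratic extension and $(a,x)_\pp = 1$ iff $v_\pp(x)$ is even, while $a$ is a square at each of the finitely many other primes where the local norm condition could contribute. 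A finite Boolean combination of such norm-form predicates then carves out the diophantine set $\{x \in K : v_\pp(x) \text{ is even}\}$.

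Finally, to upgrade the parity condition to $v_\pp(x) \geq 0$, I would apply a Julia Robinson/Rumely style trick: fixing a uniformizer $\pi$ at $\pp$, the predicate $v_\pp(x) \geq 0$ can be cut out by combining the parity predicate applied to $x$, $\pi x$, and $1 - \pi x$ (or similar shifts), supplemented by auxiliary conditions coming from quadratic extensions ramified at $\pp$, whose Hilbert symbols encode a congruence modulo $\pp$ rather than just a parity. The main obstacle is precisely this last step: Hilbert symbols attached to unramified quadratic extensions see only the parity of $v_\pp$, not its sign, so isolating $\OO_{(\pp)}$ inside the larger group $\{x : v_\pp(x) \text{ even}\}$ requires careful engineering of auxiliary primes and ramified extensions, in the style of Rumely, Poonen, Shlapentokh, and Eisentr\"ager. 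In characteristic $2$ one replaces Kummer quadratic extensions throughout by Artin--Schreier extensions and invokes the Hasse norm theorem for those cyclic extensions instead.
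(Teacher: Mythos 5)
The paper does not prove this lemma itself; its proof is simply a citation to Shlapentokh [Shl94, Lemma~3.22], so there is no internal argument to compare against. Your sketch follows the standard route in that literature (Rumely, Shlapentokh, Poonen, Eisentr\"{a}ger, Koenigsmann, Park): read integrality conditions off of norm forms of quadratic (or, in characteristic $2$, Artin--Schreier) extensions via the Hasse norm theorem and Hilbert symbols. The broad plan is the right one, but as written it has two real gaps, one of which is an error rather than an omission.

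The error is in the choice of $a$. There is no $a\in K^*$ that is a unit non-square at $\pp$ and a local square at every other place: taking any $x$ with $v_\pp(x)$ odd would give $(a,x)_\pp=-1$ while $(a,x)_\qq=1$ for all $\qq\neq\pp$, violating $\prod_\qq (a,x)_\qq=1$; equivalently, $K(\sqrt a)/K$ would be an everywhere-unramified quadratic extension with exactly one inert prime, which Chebotarev forbids. So a single norm form inevitably yields a \emph{semi-local} predicate, controlling parity at $\pp$ together with at least one auxiliary prime, and isolating $\pp$ requires intersecting two such predicates whose auxiliary sets meet only at $\pp$. This bookkeeping is not a cosmetic refinement; it is forced by reciprocity, and your phrase ``finitely many other primes where the local norm condition could contribute'' glosses over it (those primes are not under your control once $x$ ranges over all of $K$). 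The second gap is the one you flag yourself --- upgrading ``$v_\pp(x)$ even'' to ``$v_\pp(x)\geq 0$'' --- but you defer it rather than carry it out. The mechanism that makes it work, and which this paper runs in Lemma~\ref{IabcCalculation} in the $\ell$-th power setting, is to impose the parity condition simultaneously on $x$ (or $x/c$) and on $1-x$: when $v_\pp(x)\not\equiv 0$, the identity $v_\pp(1-x)=\min\{0,v_\pp(x)\}$ shows that also requiring $v_\pp(1-x)\equiv 0$ forces $v_\pp(x)>0$. Finally, the framing ``isolating $\OO_{(\pp)}$ inside the larger group $\{x: v_\pp(x)\ \text{even}\}$'' is a misstatement --- $\OO_{(\pp)}$ contains uniformizers, so is not contained in that group; the ring is instead \emph{built up additively} from such even-and-positive sets, which is one more step your outline presupposes rather than supplies.
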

\begin{proof}
See \cite[Lemma 3.22]{Shl94}. 
\end{proof}

If $K$ is a global function field and $S$ is
a finite, nonempty set of primes of $K$, the {\em ring of $S$-integers} is 
\[
\OO_S:=\bigcap_{\pp\not\in S} \OO_{\pp}
\]
Then $\OO_S$ is a dedekind domain, and the primes of $K$ not contained
in $S$ are in one-to-one correspondence with the maximal ideals in
$\OO_S$ by the map $\pp\mapsto \pp\cap\OO_S$. Set $A:=\OO_S$; then
there is a positive-characteristic analogue of the Hilbert class field
of $K$, denoted $K^A$. The extension $K^A$ is defined to be the
maximal abelian extension of $K$ in which every prime of $S$ splits
completely. The extension $K^A/K$ has finite degree over $K$ and
satisfies $\Cl(A)\simeq \Gal(K^A/K)$; see \cite{Rosen87}. For
additional background on arithmetic in global function fields, see
\cite{Rosen02}.
\subsection{The Artin Map}


A {\em modulus} $\mm = \prod_{\pp} \pp^{\mm(\pp)}$ of $K$ is a formal product of finitely many nonnegative powers of
primes (both finite and infinite) of $K$. Given a modulus $\mm$ of
$K$, let $\mm_0$ denote the finite part of $\mm$, and let $I(\mm)$
denote the free abelian group generated by the finite primes of $K$
such that  $\mm(\pp)=0$, i.e. those 
which do not divide $\mm$. The {\em support} of an element $\prod \pp^{e_{\pp}}\in I(\mm)$ are the primes $\pp$ such that $e_{\pp}\not=0$. Two elements of $I(\mm)$ are {\em coprime} if their supports are disjoint. Define 
\[
K_{\mm}:=\{x\in K: v_{\pp}(x)=0 \text{ for all }  \pp|\mm_0\}.
\]
 If $\pp|\mm$ is a real infinite prime, then it is associated with an embedding $K\to \RR$. An element of $K^{\times}$ is {\em  positive at $\pp$} if its image under the embedding associated to $\pp$ is positive. We define
\[
  K_{\mm,1}:=\{x\in K^{\times }: v_{\pp}(x-1)\geq \mm(\pp) \,\forall \pp|\mm_0
  \text{ and positive at each real } \pp|\mm\}.
\]
We have
an embedding  
\begin{align*}
 K_{\mm} &\to I(\mm) \\ 
x&\mapsto (x):=\prod_{\pp} \pp^{v_{\pp}(x)}.
\end{align*}
 Denote the image of $K_{\mm,1}$ in
$I(\mm)$ by $P(\mm)$ and set $C(\mm):=I(\mm)/P(\mm)$. 

Now suppose $L/K$ is an abelian extension and that $\mm$ is a modulus
of $K$ containing the primes of $K$ ramified in $L$. Given $\pp$
coprime to $\mm$, we denote the Frobenius of $\pp$ in $\Gal(L/K)$ by
$(\pp,L/K)$ and we define the Artin map for $L/K$:
\begin{align*}
\psi_{L/K}: I(\mm)&\to \Gal(L/K) \\
\prod \pp_i^{e_i} &\mapsto \prod (\pp,L/K)^{e_i}.
\end{align*}

Let $I_K:=I(1)$ denote the fractional ideals of $K$. Suppose
that $L/K$ is a finite extension. Given a prime $\Pp$ of $L$, set
$\pp:=\Pp\cap K$ and $f(\Pp|\pp):=[\OO_{\Pp}/\Pp:\OO_{\pp}/\pp]$. Then
we have the relative ideal norm map
\[
N_{L/K}(\Pp):=\pp^{f(\Pp|\pp)}
\]
which we extend to $I_L$. Artin Reciprocity lets us characterize the
kernel of the Artin map:
\begin{thm}[Artin Reciprocity]
  Suppose $L/K$ is a finite abelian extension. Then there is a modulus
  $\mm$ of $K$ such that if $\mm'$ is the modulus of $L$ containing
  the primes of $L$ above those dividing $\mm$, 
\[
\psi_{L/K}: I(\mm)/(P(\mm)\cdot N_{L/K}(I(\mm'))) \simeq \Gal(L/K)
\]
is an isomorphism. 
\end{thm}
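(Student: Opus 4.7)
This is a foundational theorem of class field theory rather than a consequence of the techniques developed so far in the paper, so my plan is to sketch the classical route rather than give a self-contained derivation. The strategy has three ingredients: (i) surjectivity of $\psi_{L/K}$, (ii) the containment $P(\mm)\cdot N_{L/K}(I(\mm')) \subseteq \ker \psi_{L/K}$, and (iii) the index computation $[I(\mm) : P(\mm)\cdot N_{L/K}(I(\mm'))] = [L:K]$. Together these yield the isomorphism, and the appropriate choice of $\mm$ is the \emph{conductor} $\mathfrak{f}(L/K)$: it must be divisible by every prime of $K$ ramified in $L$ and, at each such prime, by a sufficiently high local exponent.

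Surjectivity in step (i) follows from Chebotarev's density theorem: every element of the abelian group $\Gal(L/K)$ arises as $(\pp, L/K)$ for infinitely many unramified $\pp$. The containment $N_{L/K}(I(\mm')) \subseteq \ker \psi_{L/K}$, which is half of step (ii), is immediate: for an unramified prime $\Pp$ of $L$ above $\pp$ with $\Pp \nmid \mm'$, one has $\psi_{L/K}(N_{L/K}(\Pp)) = (\pp, L/K)^{f(\Pp|\pp)} = 1$ because the Frobenius generates the cyclic decomposition group of order $f(\Pp|\pp)$, and the general case follows by multiplicativity.

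The deep step is the reciprocity inclusion $P(\mm) \subseteq \ker \psi_{L/K}$ together with the index equality in (iii). I would pass to the idelic formulation: assemble local reciprocity maps $\theta_v : K_v^* \to \Gal(L_w/K_v)$, built from the local invariant on $\mathrm{Br}(K_v)$, into a global map $\theta_{L/K} : \Aa_K^* \to \Gal(L/K)$ and show that the diagonally embedded $K^*$ lies in its kernel. By Shapiro's lemma and induction on the structure of $\Gal(L/K)$, this reduces to the cyclic case, which rests on the two fundamental inequalities: writing $\mathcal{C}_K := \Aa_K^*/K^*$ for the idele class group, the first inequality $[\mathcal{C}_K : N_{L/K}\mathcal{C}_L] \leq [L:K]$ is typically proved analytically via non-vanishing of Hecke $L$-functions at $s=1$ (or, in the function field case, via the Weil conjectures for curves), while the second inequality $[\mathcal{C}_K : N_{L/K}\mathcal{C}_L] \geq [L:K]$ comes from Tate cohomology of $\mathcal{C}_L$ and the Hasse--Brauer--Noether exact sequence $0 \to \mathrm{Br}(K) \to \bigoplus_v \mathrm{Br}(K_v) \to \QQ/\ZZ \to 0$. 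The reciprocity law for general abelian $L/K$ then follows by decomposing $\Gal(L/K)$ into cyclic factors, sometimes after an auxiliary cyclotomic base change to resolve residual cases. The main obstacle, by a wide margin, is precisely this reciprocity law and the two inequalities, which together encode essentially the entire depth of class field theory; once they are in hand, matching the orders of the source and target of $\psi_{L/K}$ forces the stated isomorphism.
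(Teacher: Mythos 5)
The paper does not prove this theorem; Artin reciprocity is stated as a known background result from class field theory with no argument or even a citation attached, so there is no proof in the paper to compare against. You correctly identified the statement as foundational rather than paper-specific, and your sketch is a fair outline of the classical route: Chebotarev for surjectivity of $\psi_{L/K}$, the elementary containment $N_{L/K}(I(\mm')) \subseteq \ker\psi_{L/K}$ coming from the order of the Frobenius in the decomposition group, the reciprocity inclusion $P(\mm) \subseteq \ker\psi_{L/K}$ handled idelically, and the two fundamental inequalities forcing the index to equal $[L:K]$. You also correctly flag the conductor as the natural admissible modulus. One small calibration: the algebraic inequality $[\mathcal{C}_K : N_{L/K}\mathcal{C}_L] \geq [L:K]$ for cyclic $L/K$ is usually obtained from a Herbrand-quotient computation on the idele class group; the Hasse--Brauer--Noether exact sequence is more naturally a corollary of the finished theory than an ingredient used to prove the inequality, though the two are logically entangled enough that a presentation going through Brauer groups can be arranged. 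Beyond that nuance, your outline matches the standard literature.
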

Such a modulus $\mm$ is called an {\em admissible
modulus of $K$ for $L$}.
\subsection{Ray class groups of rings of $S$-integers}
Suppose that $K$ is a global function field and $S$ is a finite set of
primes of $K$. Set $A:=\OO_S$. Suppose $\mm$ is a modulus of $K$
divisible by primes which are not in $S$. Then
$M:=\prod_{\pp|\mm}(\pp\cap A)^{v_{\pp}(\mm)}$ is
an ideal of $A$. Let $I_{\mm}(A)$ be the abelian group generated by
primes of $A$ which are coprime to $M$, let $P_{\mm}(A)$
be the image of $K_{\mm,1}$ in $P_{\mm}(A)$, and let
$\Cl_{\mm}(A):=I_{\mm}(A)/P_{\mm}(A).$ We have the following ``folk
theorems;'' for proofs, see Corollary 2.8 and Theorem 2.9 in \cite{EM17}.
\begin{thm}\label{folkthms}
There is a finite abelian
extension $K_{\mm}^A/K$ such that $\Cl_{\mm}(A)\simeq \Gal(K_{\mm}^A/K)
$, where the isomorphism is the Artin map. Additionally, every class in $\Cl_{\mm}(A)$
contains infinitely many primes of $A$.
\end{thm}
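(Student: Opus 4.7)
The plan is to reduce to the classical ray class field of $K$ and then cut out the subextension in which every prime of $S$ splits completely.

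First, classical class field theory for the global function field $K$ produces a finite abelian extension $L_\mm/K$, the ray class field of $K$ for $\mm$, such that the Artin map gives an isomorphism $\psi_{L_\mm/K}\colon I(\mm)/P(\mm) \xrightarrow{\sim} \Gal(L_\mm/K)$; a prime $\pp\nmid \mm$ splits completely in $L_\mm/K$ if and only if its class in $I(\mm)/P(\mm)$ is trivial. Next I would relate $\Cl_\mm(A)$ to this classical ray class group. By hypothesis no prime of $S$ divides $\mm$, so every prime in $S$ lies in $I(\mm)$. The natural ``drop the $S$-part'' homomorphism $I(\mm)\twoheadrightarrow I_\mm(A)$ is surjective with kernel $H_S := \langle \pp : \pp\in S\rangle$, and it carries $P(\mm)$ onto $P_\mm(A)$ since both are the image of $K_{\mm,1}$. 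Hence
\[
\Cl_\mm(A) \;\simeq\; I(\mm)\big/\bigl(P(\mm)\cdot H_S\bigr).
\]

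Now define $K_\mm^A$ to be the fixed field in $L_\mm$ of the subgroup $\psi_{L_\mm/K}(H_S)\subseteq \Gal(L_\mm/K)$. Equivalently, $K_\mm^A$ is the maximal subextension of $L_\mm/K$ in which every prime of $S$ splits completely, because the primes of $S$ are unramified in $L_\mm/K$ and a prime is split completely in a subextension iff its Frobenius lies in the fixing subgroup. By Galois theory,
\[
\Gal(K_\mm^A/K) \;\simeq\; \Gal(L_\mm/K)\big/\psi_{L_\mm/K}(H_S) \;\simeq\; I(\mm)\big/\bigl(P(\mm)\cdot H_S\bigr) \;\simeq\; \Cl_\mm(A),
\]
and the composite is induced by the Artin map of $K_\mm^A/K$, as required.

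Finally, to see that each class in $\Cl_\mm(A)$ contains infinitely many primes of $A$, I would apply Chebotarev's density theorem to $K_\mm^A/K$: each $\sigma\in \Gal(K_\mm^A/K)$ is the Frobenius of a positive-density set of primes of $K$ unramified in $K_\mm^A$. Discarding the finitely many primes in $S$ still leaves infinitely many, and under the Artin isomorphism above these correspond to primes of $A$ lying in the class of $\Cl_\mm(A)$ matching $\sigma$. The main technical point is simply to verify that the ray class field construction and Chebotarev's theorem hold for global function fields with essentially the same statement as in the number field case; both are standard and can be found in \cite{Rosen02}.
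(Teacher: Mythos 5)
Your plan fails at the very first step in the function-field setting, which is the only setting the theorem is about. You invoke ``the ray class field of $K$ for $\mm$'', a \emph{finite} abelian extension $L_\mm/K$ with $\Gal(L_\mm/K)\simeq I(\mm)/P(\mm)$, and then cut out $K_\mm^A$ inside it. But when $K$ is a global function field the ray class group $I(\mm)/P(\mm)$ is \emph{infinite}: principal divisors have degree zero, so the degree map induces a surjection $I(\mm)/P(\mm)\twoheadrightarrow \ZZ$ (up to finite index). Consequently no finite abelian extension $L_\mm/K$ realizing $I(\mm)/P(\mm)$ as its Galois group exists, and the object you want to take a fixed field inside is not there. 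This is precisely the well-known divergence between number-field and function-field class field theory, and it is the reason one works with the $S$-class group $\Cl_\mm(A)$ in the first place: introducing $S\neq\emptyset$ kills the $\ZZ$-quotient coming from the degree map. Your closing remark that ``the ray class field construction \ldots holds for global function fields with essentially the same statement as in the number field case'' is therefore exactly the point that breaks.

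The computation $\Cl_\mm(A)\simeq I(\mm)/(P(\mm)\cdot H_S)$ is fine, and the Chebotarev step is fine once the field $K_\mm^A$ is in hand. What is missing is a construction of $K_\mm^A$ that does not pass through the nonexistent $L_\mm$, together with a proof that $\Cl_\mm(A)$ is actually finite (which you are implicitly smuggling in via finiteness of $L_\mm/K$). One correct route: show directly that $P(\mm)\cdot H_S$ has finite index in $I(\mm)$ (this uses $S\neq\emptyset$ and Riemann--Roch/$S$-unit-type input), then apply the existence theorem of function-field class field theory, which assigns to any finite-index congruence subgroup of $I(\mm)$ containing $P(\mm)$ a finite abelian extension with Artin isomorphism. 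Alternatively, work adelically/idelically, or build $K_\mm^A$ on top of the $S$-Hilbert class field $K^A$ from \cite{Rosen87} by adding the required ramification at $\mm$. The paper sidesteps all of this by citing Corollary 2.8 and Theorem 2.9 of \cite{EM17}; if you want to give a self-contained proof you need to supply one of these constructions and justify finiteness of $\Cl_\mm(A)$, rather than appeal to a finite ray class field that does not exist.
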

We will need Theorem~\ref{folkthms} in Section~\ref{sec:integrality}.

\subsection{The power residue symbol}
Now assume $K$ is a global field, $\ell$ is a prime number coprime to the characteristic of $K$, and that $K$
contains $\mu_{\ell}$, the $\ell$th roots of unity. Let
$\omega\in\mu_{\ell}$ be a primitive root of unity. Let $a\in K^{\times }$. If $\pp$ is
a finite prime of $K$ such that $v_{\pp}(a)=0$, and if
$\alpha\in K(\sqrt[\ell]{a})$ is any root of $x^{\ell}-a$, then
$(\pp,K(\sqrt[\ell]{a})/K)(\alpha)/\alpha$ is an $\ell$th root of
unity and thus equals $\omega^s$ for some $0\leq s \leq \ell-1$. The
value of $s$ is independent of the choice of $\alpha$. Define
\[
\left(\frac{a}{\pp}\right)_{\ell} :=
\frac{(\pp,K(\sqrt[\ell]{a})/K)(\alpha)}{\alpha}
\]
to be the {\em $\ell$th power residue symbol} for the prime
$\pp$. Equivalently, $\omega^s=\left(\frac{a}{\pp}\right)_{\ell}$ is
the unique $\ell$th root of unity satisfying 
\[
\alpha^{(|\FF_{\pp}|-1)/\ell} \equiv \omega^s \pmod \pp.
\]
This lets us compute the Frobenius of a prime $\pp$ coprime to $\mm$. 
The power residue symbol is multiplicative on $\OO_{\pp}^{\times}$, and if
$\left(\frac{a}{\pp}\right)_{\ell}=1$, by Hensel's Lemma, there exists $b\in
\OO_{\pp}^{\times}$ such that $a\equiv b^{\ell} \pmod \pp$.  

We will use a fixed compositum of two cyclic degree $\ell$ extensions of $K$ in our diophantine definitions. Suppose $\ell$ is a prime number and $(\ch(K),\ell)=1$. Assume that
$a,b$ generate distinct, nontrivial subgroups in $K^{\times }/K^{\times  \ell}$
and suppose $\mm$ is a modulus of $K$ containing the primes of $K$
ramified in $L:=K(\sqrt[\ell]{a},\sqrt[\ell]{b})$. Then we have an isomorphism
\begin{align*}
  \iota:\Gal(L/K) &\to \mu_{\ell}\times\mu_{\ell} \\ 
  \sigma
            &\mapsto 
\left(\frac{\sigma(\sqrt[\ell]{a})}{\sqrt[\ell]{a}},
\frac{\sigma(\sqrt[\ell]{b})}{\sqrt[\ell]{b}}\right).
\end{align*}
 Thus for $L/K$, under the above identification, the Artin map is given by
\[
\iota((\pp,L/K)) =  
\left(\left(\frac{a}{\pp}\right)_{\ell},\left(\frac{b}{\pp}\right)_{\ell}\right).
\]



\subsection{The Hilbert Symbol}
Given a finite prime $\pp$ of $K$, we denote by $K_{\pp}$ the
completion of $K$ at $\pp$. Suppose $(\ch(K), n)=1$. By Kummer
theory and local class field theory, we have a non-degenerate pairing,
called the Hilbert symbol:
\[
(\cdot,\cdot)_{K_{\pp},n}:K_{\pp}^{\times }/K_{\pp}^{\times  n} \times K_{\pp}^{\times }/K_{\pp}^{\times  n} \to \mu_n 
\]
defined by 
\[
(\cdot,\cdot)_{K_{\pp},n} :=
\frac{(a,K_{\pp}(\sqrt[n]{b})/K_{\pp})(\sqrt[n]{b})}{\sqrt[n]{b}}.
\]

This pairing satisfies $(a,b)_{K_{\pp},n}=1$
if and only if $a$ is a norm in $K(\sqrt[n]{b})$. We also have the
following identity:
\begin{thm}[Hilbert Reciprocity]
For $a,b\in K$, 
\[
\prod_{\pp} (a,b)_{K_{\pp},n} = 1.
\]
\end{thm}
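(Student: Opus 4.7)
The plan is to deduce Hilbert Reciprocity directly from the idelic form of Artin Reciprocity. Fix $a,b\in K^*$; we may assume $b\notin K^{*n}$, else both sides are trivial, so set $L := K(\sqrt[n]{b})$. The Kummer pairing identifies $\Gal(L/K)$ with a subgroup of $\mu_n$ via $\sigma\mapsto \sigma(\sqrt[n]{b})/\sqrt[n]{b}$, and by the very definition of the Hilbert symbol, $(a,b)_{K_{\pp},n}$ is precisely the image of the local Artin element $\psi_{L_{\pp}/K_{\pp}}(a)\in \Gal(L_\pp/K_\pp)\hookrightarrow \Gal(L/K)$ under this identification. Thus the product $\prod_\pp (a,b)_{K_{\pp},n}\in \mu_n$ corresponds to $\prod_\pp \psi_{L_{\pp}/K_{\pp}}(a)$ computed inside $\Gal(L/K)$.

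The first preliminary step is to observe that the infinite product makes sense, i.e.\ that almost every local symbol is trivial. For primes $\pp$ unramified in $L$, which excludes only the finitely many $\pp$ dividing $n$ and $b$, the local Artin map $\psi_{L_\pp/K_\pp}$ kills $\OO_{(\pp)}^*$. Since $v_\pp(a)=0$ for all but finitely many $\pp$, only finitely many local factors can differ from $1$. The central step is then to invoke global Artin Reciprocity in its idelic packaging: the map $\mathbb{A}_K^*\to \Gal(L/K)$, $(\alpha_\pp)\mapsto \prod_\pp \psi_{L_\pp/K_\pp}(\alpha_\pp)$, factors through the idele class group $\mathbb{A}_K^*/K^*$ and hence vanishes on the diagonally embedded principal idele $(a,a,a,\ldots)$. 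This yields $\prod_\pp \psi_{L_\pp/K_\pp}(a)=1$ in $\Gal(L/K)$, and transporting through the Kummer identification gives $\prod_\pp (a,b)_{K_\pp,n}=1$.

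The main obstacle is bookkeeping rather than mathematics: the excerpt states Artin Reciprocity only in its ideal-theoretic form (via $I(\mm)/(P(\mm)N_{L/K}(I(\mm')))$), whereas the natural statement for Hilbert Reciprocity is idelic, and the excerpt defines $(a,b)_{K_\pp,n}$ only at finite primes. One must either extend the symbol to archimedean primes by the same local-reciprocity recipe and check that those extra factors are trivial except in the familiar $n=2$ case, or translate the idelic identity $\prod_\pp \psi_{L_\pp/K_\pp}(a)=1$ into the author's ideal-theoretic framework. For this reason I expect the author to cite Hilbert Reciprocity as a standard consequence of class field theory (e.g.\ Neukirch, \emph{Algebraic Number Theory}, Ch.~VI) rather than reprove it from scratch.
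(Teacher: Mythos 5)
Your guess is right: the paper does not prove Hilbert Reciprocity at all — it is stated as a black-box theorem of class field theory, and the cited proof from Serre immediately below the statement is for the explicit tame formula (Equation \ref{hilbertformula}), not for the reciprocity law itself. Your sketch via the idelic Artin map (interpreting $(a,b)_{K_\pp,n}$ as the image of $\psi_{L_\pp/K_\pp}(a)$ under the Kummer identification with $L=K(\sqrt[n]{b})$, noting almost all local factors are trivial, and applying vanishing of the global Artin map on the principal idele $(a,a,\ldots)$) is the standard derivation and is sound. The bookkeeping caveat you flag is a real one: as written, the paper defines the symbol only at finite primes, so the displayed product is literally correct only when archimedean contributions are automatically trivial; this holds in the paper's main regime ($n=\ell$ an odd prime over a field containing $\mu_\ell$, where real places contribute nothing, and over function fields, where there are none), but a careful statement for general $n$ would include the archimedean symbols.
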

Let $a,b\in K_{\pp}$. Let $R_{\pp}\subseteq K_{\pp}$ be the ring of
integers of $K_{\pp}$, and let $\FF_{\pp}:=R_{\pp}/\pp$ be the residue
field of $\pp$. Let $\red_{\pp}: R_{\pp} \to \FF_{\pp}$ be the
reduction map.  We have the following formula for computing the
Hilbert symbol when $(\ch(\FF_{\pp}),n)=1$.
\begin{equation}\label{hilbertformula}
(a,b)_{K_{\pp},n} =
 \left((-1)^{v_{\pp}(a)v_{\pp}(b)} \red_{\pp}\left(\frac{a^{v_{\pp}(b)}}{b^{v_{\pp}(a)}}\right)\right)^{(|\FF_{\pp}|-1)/n}.
\end{equation}
\begin{proof}
This is Corollary to Proposition 8 in XIV.3 in \cite{Ser79}. 
\end{proof}
\subsection{Cyclic Algebras}

\begin{defn}
Let $n\in \NN$, let $K$ be a global field, and let $L/K$ be a finite cyclic extension of degree $n$. Let
$\sigma$ be a generator of $\Gal(L/K)$ and let $b\in K^{\times }$. The cyclic
 algebra associated with $\sigma$ and $b$, denoted $(\sigma,b)$, 
is generated by $L$ and an element $T\in (\sigma,b)$ which satisfies 
\[
T^n=b, \quad T\cdot s = \sigma(s)\cdot T
\]
for all $s\in L$. 
\end{defn}
This is a degree $n$ central simple algebra over $K$ containing $L$ as
a commutative sub algebra, and is split by $L$. If $(\ch(K),n)=1$ and
if $K$ contains the $n$th roots of unity $\mu_n$, we can give a
simpler presentation of a degree $n$ cyclic algebra. Given
$a,b\in K^{\times}$ and a primitive $n$th root of unity $\omega$, we define
\[
(a,b)_{\omega}:=\langle S,T \,|\, T^n=a,  S^n = b, ST=\omega TS\rangle.
\]
We are interested in these cyclic algebras because they are split if and only if their corresponding norm equations have rational solutions. We record the following theorem:
\begin{thm}
If $K$ is a global field  and $\sigma$ generates
$\Gal(L/K)$ for a cyclic extension $L/K$, the cyclic algebra $(\sigma,b)$ is
split if and only if $b$ is a norm of $L/K$. If $(\ch(K),n)=1$ and $K$ contains a
primitive $n$th root of unity $\omega$, then $(a,b)_{\omega}$ is split if and
only if $b$ is a norm of $K(\sqrt[n]{a})$.  
\end{thm}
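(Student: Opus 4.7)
The plan is to prove the cyclic-algebra criterion $(\sigma, b)\text{ split} \iff b \in N_{L/K}(L^*)$ directly by producing an explicit isomorphism with $M_n(K)$ in one direction and extracting a norm expression in the other, then deduce the $(a,b)_\omega$ version by recognizing it as a cyclic algebra over the Kummer extension $K(\sqrt[n]{a})/K$.

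For the $(\Leftarrow)$ direction, suppose $b = N_{L/K}(c)$ for some $c \in L^*$. I would replace the outer generator $T$ of $(\sigma, b)$ by $T' := c^{-1} T$. A short computation using $Ts = \sigma(s) T$ shows $T' s = \sigma(s) T'$ still, while
\[
(T')^n = c^{-1}\sigma(c^{-1})\cdots\sigma^{n-1}(c^{-1})\,T^n = N_{L/K}(c)^{-1}\,b = 1,
\]
so $(\sigma, b) \cong (\sigma, 1)$. The latter is matched with $\End_K(L)$ by letting $L$ act on itself by multiplication and sending the outer generator to $\sigma \in \End_K(L)$: the relations $\sigma \circ m_s = m_{\sigma(s)} \circ \sigma$ and $\sigma^n = \mathrm{id}$ mirror those of $(\sigma, 1)$, and since the source is a central simple algebra of dimension $n^2$, the resulting nonzero $K$-algebra map into $\End_K(L)$ is forced to be an isomorphism for dimension reasons.

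For the $(\Rightarrow)$ direction, assume $(\sigma, b) \cong \End_K(V)$ with $\dim_K V = n$. The inclusion $L \subset (\sigma, b)$ endows $V$ with the structure of an $L$-module, and a dimension count forces $\dim_L V = 1$, so I identify $V = L$. Then the outer generator $T$ becomes an element of $\End_K(L)$ with $T(sx) = \sigma(s)\,T(x)$, which forces $T(x) = c\,\sigma(x)$ where $c := T(1) \in L$; iterating yields $T^n(x) = N_{L/K}(c)\,x$, and equating with $b \in K$ gives $b = N_{L/K}(c)$.

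The second statement then reduces to the first: when $\omega \in K$ and $(\ch K, n) = 1$, assuming $a \notin K^{*n}$, the extension $L := K(\sqrt[n]{a})/K$ is cyclic of degree $n$ with generator $\sigma : \sqrt[n]{a} \mapsto \omega \sqrt[n]{a}$, and identifying the inner generator $\sqrt[n]{a} \in L$ with $T$ in the presentation of $(a,b)_\omega$ turns the relation $ST = \omega TS$ into $Ss = \sigma(s)\,S$ for $s \in L$, so $(a,b)_\omega$ and $(\sigma, b)$ have identical presentations and are isomorphic. The degenerate case $a \in K^{*n}$ is trivial. I do not anticipate any genuine obstacle: the cyclicity of $L/K$ is precisely what lets the argument go through with a single outer generator, bypassing the full crossed-product machinery.
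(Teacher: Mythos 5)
Your proof of the first assertion --- that $(\sigma,b)$ is split if and only if $b\in N_{L/K}(L^*)$ --- is correct and self-contained, and in fact proves the statement over an arbitrary base field, not just a global one. The paper simply cites \cite{GS2006}, so your explicit construction of the splitting via $T'=c^{-1}T$ and, conversely, the extraction of $c=T(1)$ from the $L$-module structure of the splitting module, is a genuine alternative. The two directions are argued cleanly; the only thing you tacitly use is that $(\sigma,b)$ is a central simple $K$-algebra of dimension $n^2$, which justifies the ``dimension reasons'' step, but this is a standard fact and reasonable to assume.

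The gap is in the reduction of the $(a,b)_\omega$ statement to the first. You write that ``assuming $a \notin K^{*n}$, the extension $L := K(\sqrt[n]{a})/K$ is cyclic of degree $n$,'' but this is false for composite $n$: by Kummer theory $[K(\sqrt[n]{a}):K]$ equals the order $d$ of $a$ in $K^*/K^{*n}$, which can be a proper divisor of $n$ with $1<d<n$ (for instance $K=\QQ(\zeta_8)$, $n=4$, $a=2$: then $a^2=(\sqrt 2)^4$, so $d=2$). In that case $K[T]\subset (a,b)_\omega$ is not a field --- it is a product of $n/d$ copies of $K(\sqrt[n]{a})$ --- and $(a,b)_\omega$ is \emph{not} of the form $(\sigma,b)$ for $\sigma$ generating $\Gal(K(\sqrt[n]{a})/K)$, since the latter would have degree $d<n$. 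So your identification of presentations fails, and the first part of the theorem cannot be applied directly. Closing this requires an additional step, e.g., showing that $(a,b)_{\omega,n}$ is Brauer-equivalent to the degree-$d$ symbol algebra built from the primitive $d$th root of unity $\omega^{n/d}$, to which your argument then applies. (Your dismissal of the case $a\in K^{*n}$ as ``trivial'' also elides a small computation, namely $(a,b)_\omega\cong(1,b)_\omega\cong M_n(K)$ via the idempotents of $K[T]/(T^n-1)$ permuted cyclically by $S$, but that really is routine.) For the purposes of this paper the gap is harmless, since the main argument takes $n=\ell$ prime, where $a$ has order $1$ or $\ell$ and no intermediate case arises; but as a proof of the theorem as stated for general $n$, the missing case needs to be addressed.
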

\begin{proof}
See \cite{GS2006}, Corollaries 4.7.5 and 4.7.7. 
\end{proof}

We will also need the following theorem, proved in~\cite{Park}, on prescribing Hilbert
symbols:
\begin{thm}\label{prescribesymbols}
Let $n$ be a natural number and let $K$ be a global field containing
$\mu_n$ and satisfying $(\ch(K),n)=1$. Let $\Sigma$ denote the set
of primes of $K$, 
and let $\Lambda$ be a finite set of indices. Let $(a_i)_{i\in
  \Lambda}$ be a finite sequence of elements of $K^{\times}$ and suppose that
$(\varepsilon_{i,\pp})_{i\in \Lambda, \pp\in \Sigma}$ is a family of
elements of $\mu_{n}$. There exists $x\in K^{\times}$ satisfying
$(a_i,x)_{K_{\pp},n}=\varepsilon_{i,\pp}$ for all $i\in \Lambda$
and $\pp\in \Sigma$ if and only if the following conditions hold:
\begin{enumerate}
\item All but finitely many of the $\varepsilon_{i,\pp}$ are equal to
  1.
\item For all $i\in \Lambda$, we have $\prod_{\pp\in \Sigma}
  \varepsilon_{i,\pp}=1$. 
\item For every $\pp\in \Sigma$, there exists $x_{\pp}\in K^{\times}$ such
  that $(a_i,x_{\pp})_{K_{\pp},n}=\varepsilon_{i,\pp}$. 
\end{enumerate}
\end{thm}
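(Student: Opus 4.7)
The plan is to first verify necessity and then establish sufficiency by combining weak approximation, the explicit Hilbert symbol formula \eqref{hilbertformula}, and class field theory for the Kummer extension generated by the $a_i$'s.

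Necessity is immediate: by \eqref{hilbertformula}, $(a_i, x)_{K_\pp, n} = 1$ at every finite prime $\pp$ with $\pp \nmid n$ and $v_\pp(a_i) = v_\pp(x) = 0$, leaving only finitely many primes where the symbol can be nontrivial, which gives (1). Condition (2) is Hilbert reciprocity applied to each pair $(a_i, x)$, and (3) follows by taking $x_\pp = x$.

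For sufficiency, set $L := K(\sqrt[n]{a_1}, \ldots, \sqrt[n]{a_r})$. By Kummer theory, $\Gal(L/K)$ embeds into $\mu_n^r$, and local class field theory identifies the tuple $((a_i, x)_{K_\pp, n})_i$ with the image of the local Artin symbol of $x$ in $\Gal(L_\Pp/K_\pp) \hookrightarrow \Gal(L/K) \subseteq \mu_n^r$. Choose a finite set $S$ of primes of $K$ containing all archimedean primes, all primes above $n$, all primes ramified in $L/K$, all primes where some $a_i$ is a nonunit, and the support of $(\varepsilon_{i,\pp})$, which is finite by (1); enlarge $S$ further so that $\Cl(\OO_S)$ is $n$-divisible. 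Using (3), pick $x_\pp \in K_\pp^*$ realizing the prescribed symbols at each $\pp \in S$. Since $K_\pp^{*n}$ is open in $K_\pp^*$, weak approximation yields $y \in K^*$ with $y \equiv x_\pp \pmod{K_\pp^{*n}}$ at every $\pp \in S$, so $(a_i, y)_{K_\pp, n} = \varepsilon_{i,\pp}$ for all $i$ and all $\pp \in S$.

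At $\pp \notin S$, formula \eqref{hilbertformula} reduces $(a_i, y)_{K_\pp, n}$ to $\left(\frac{a_i}{\pp}\right)_n^{v_\pp(y)}$, which is $1$ whenever $n \mid v_\pp(y)$. We therefore seek $u \in K^*$ that is a local $n$-th power at every $\pp \in S$ (preserving the symbols on $S$) and satisfies $n \mid v_\pp(yu)$ at every $\pp \notin S$ (killing the symbols off $S$). The $n$-divisibility of $\Cl(\OO_S)$ provides the required fractional-ideal adjustment outside $S$, while weak approximation enforces the $n$-th power conditions at $\pp \in S$; Chebotarev's theorem on densities of primes with prescribed Frobenius in $L/K$ supplies the auxiliary primes needed to fine-tune this construction. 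The main obstacle is the simultaneous enforcement of these two constraints on $u$: Hilbert reciprocity (condition (2)) is precisely what makes them globally consistent, since after arranging the prescribed symbols at $S$, the product of the residual Hilbert symbols outside $S$ is forced to be trivial, eliminating the last potential global obstruction.
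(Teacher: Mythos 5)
Your outline is right, and necessity is handled correctly, but the sufficiency step contains a genuine gap: the two constraints you place on the modification $u$ — that $u$ be a local $n$-th power at every $\pp\in S$ and that $n\mid v_\pp(yu)$ at every $\pp\notin S$ — cannot in general be met simultaneously. Together they assert $yu \in K_S(n) := \{z\in K^* : n\mid v_\pp(z)\ \forall\,\pp\notin S\}$ with $yu\equiv y$ in $K_\pp^*/K_\pp^{*n}$ for each $\pp\in S$. Even after enlarging $S$ to trivialize $\Cl(\OO_S)$, one has $K_S(n)=\OO_S^*K^{*n}$, so this demands that $y$ lie in the image of $\OO_S^*$ in $\prod_{\pp\in S}K_\pp^*/K_\pp^{*n}$; this already fails for $K=\QQ$, $n=2$, $S=\{\infty,2\}$, $y=3$ (the class of $3$ in $\QQ_2^*/\QQ_2^{*2}$ is not that of any $\pm 2^k$). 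Hilbert reciprocity cannot rescue this: with your constraints there is no residual prime at which the product formula has a factor left to force to be trivial. The $n$-divisibility of $\Cl(\OO_S)$ is also not the relevant invariant here; what matters is the unit contribution, not the ideal class.

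The correct repair introduces exactly one fresh auxiliary prime. Pick an admissible modulus $\mathfrak{m}$ supported on $S$ with exponents large enough that $K_{\mathfrak{m},1}\subseteq\bigcap_{\pp\in S}K_\pp^{*n}$ (Hensel), and use the fact that every class in $\Cl_\mathfrak{m}(\OO_S)$ contains infinitely many primes (Theorem \ref{folkthms}, or Chebotarev for the ray class field in the number-field case) to choose $\qq\notin S$ so that $\qq\cdot\prod_{\pp\notin S}(\pp\cap\OO_S)^{v_\pp(y)}=(\alpha)\OO_S$ is principal with $\alpha\in K_{\mathfrak{m},1}$. Then $y':=y/\alpha$ has the prescribed symbols at $S$, satisfies $n\mid v_\pp(y')$ for all $\pp\notin S\cup\{\qq\}$, and has $v_\qq(y')=-1$. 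Now Hilbert reciprocity together with your condition (2) gives $\prod_{\pp\notin S}(a_i,y')_{K_\pp,n}=1$, and since the only possibly nontrivial factor is at $\qq$, it follows that $(a_i,y')_{K_\qq,n}=1$ for every $i$, finishing the proof. Your instinct that reciprocity eliminates the last obstruction is exactly right, but it acts only on the single leftover prime $\qq$, which your formulation forbids; as written, there is nothing for it to act on. (Note the paper does not itself prove this theorem: it cites Park's Theorem 3.7, whose $n=2$ argument is essentially the above.)
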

\begin{proof}
Theorem 3.7 of~\cite{Park} is this theorem for $n=2$. For general $n$ and $K$ containing $\mu_n$ with characteristic coprime to $n$, the same proof carries through, using Proposition 3.5
and Lemma 3.6 of~\cite{Park}.  
\end{proof}

\section{Cyclic algebras and diophantine definitions of semi-local rings of $K$}\label{sec:diosemilocal}

Throughout this section, assume $\ell$ is a prime, $K$ is a global field,
$\ch(K)\not=\ell$, and that $\omega\in K$ is a primitive $\ell$th root
of unity. We then have, for $a,b\in K^{\times}$ and a prime $\pp$ of $K$,
\[
(a,b)_{K_{\pp},\ell}=1 \iff (a,b)_{\omega}\otimes K_{\pp} \text{ is split}. 
\]

Let $F$ be a splitting field for $A:=(a,b)_{\omega}$. For example, we can take
$F=K(\sqrt[\ell]{b})$. We then have $A\otimes F \simeq M_{\ell}(F)$. Given
$a\in A$, define the reduced norm $\Nrd(a)$ and reduced trace
$\Trd(a)$ to be the norm and trace respectively of the image of $a$ in
$M_{\ell}(F)$. If $a=\sum_{i,j=0}^{\ell} a_{ij}S^iT^j$ then $\Nrd(a)$ and
$\Trd(a)$ are polynomials in the coefficients $a_{ij}$ of $a$. Set 
\[
S_A:= \{\Trd(a): a\in A \text { and } \Nrd(a)=1\}. 
\]

\begin{defn}\label{def:Tab}
Let $a,b\in K^{\times}$ and define
\[
\Delta_{a,b}:=\{\pp: (a,b)_{\omega}\otimes K_{\pp}\not\simeq
M_{\ell}(K_{\pp}).\}
\]
Also define the semi-local ring
\[
T_{a,b}:=\bigcap_{\pp\in \Delta_{a,b}} \OO_{\pp}.
\]
\end{defn}

$\Delta_{a,b}$ is the set of primes where $(a,b)_{\omega}$ is not
split; the set $\Delta_{a,b}$ is finite. The following proposition,
which is a special case of Proposition 2.7 of~\cite{Dit16} but which
we record here for the reader, gives a diophantine definition of
$T_{a,b}$:

\begin{prop}\label{prop:Tabdio}
Assume that $\ell\in \NN$ is a prime, that $K$ is a global field with
$\ch(K)\neq \ell$, and $\omega\in K$ is a primitive $\ell$th root of
unity. There exists $B\in \NN$ such that if $R\subseteq K$ is a finite set of
representatives of 
\[
\bigcup_{\pp:|\FF_{\pp}|<B} \OO_{\pp}/\pp\OO_{\pp},
\]
then 
\[
T_{a,b}=S_{(a,b)_{\omega}}+S_{(a,b)_{\omega}}+R.
\]
Thus $T_{a,b}$ is diophantine over $K$. 
\end{prop}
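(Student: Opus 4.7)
Write $A := (a,b)_\omega$ throughout. The plan is to prove the set equality $T_{a,b} = S_A + S_A + R$; the diophantine conclusion is then immediate, since $S_A$ is cut out by the polynomial conditions $\Nrd(\alpha) = 1$ and $\Trd(\alpha) = t$ in the $\ell^2$ coordinates of $\alpha$, $R$ is a finite and hence trivially diophantine set, and sums of diophantine sets are diophantine.

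For the forward inclusion $S_A + S_A + R \subseteq T_{a,b}$, note that at every $\pp \in \Delta_{a,b}$, $A \otimes K_\pp$ is a central division algebra of degree $\ell$ over $K_\pp$ with a unique maximal order $\OO_D$, and an element lies in $\OO_D$ iff its reduced norm has non-negative $\pp$-adic valuation. Any $\alpha \in A$ with $\Nrd(\alpha) = 1$ therefore lies in $\OO_D$, so $\Trd(\alpha) \in \OO_{(\pp)}$, giving $S_A \subseteq T_{a,b}$. Using the Chinese Remainder Theorem across the finite set of primes of $\Delta_{a,b}$, one can moreover choose the representatives $R$ inside $T_{a,b}$, which completes this direction.

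For the reverse inclusion the crucial local claim is: there is $B = B(\ell)$ such that for every $\pp \in \Delta_{a,b}$ with $|\FF_\pp| \geq B$,
\[
S_{A \otimes K_\pp} + S_{A \otimes K_\pp} = \OO_{(\pp)}.
\]
To prove this I reduce modulo the maximal ideal of $\OO_D$: the residue ring is $\FF_{\pp^\ell}$, and reduced norm and trace descend to the field norm $N$ and trace $\Tr$ of $\FF_{\pp^\ell}/\FF_\pp$. Writing $G := \ker(N : \FF_{\pp^\ell}^\times \to \FF_\pp^\times)$, standard character-sum or point-counting estimates on the affine variety $N=1$ show $\Tr(G) = \FF_\pp$ once $|\FF_\pp|$ is large compared with $\ell$. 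Smoothness of the reduced-norm-$1$ variety together with Hensel's lemma then lift this mod-$\pp$ surjectivity to the full local statement, at which point the sumset equality is automatic.

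Given $x \in T_{a,b}$, I pick $r \in R$ matching the residue of $x$ at each $\pp \in \Delta_{a,b}$ with $|\FF_\pp| < B$, so that $x - r$ becomes locally representable as $\Trd(\alpha_\pp) + \Trd(\beta_\pp)$ with $\Nrd(\alpha_\pp) = \Nrd(\beta_\pp) = 1$ at every $\pp \in \Delta_{a,b}$; at all other primes the algebra is split and $\Trd$ is already surjective on $\{\Nrd = 1\}$, so the local condition is vacuous there. Patching these local pairs into a global $(\alpha, \beta)$ satisfying $\Nrd(\alpha) = \Nrd(\beta) = 1$ and $\Trd(\alpha) + \Trd(\beta) = x - r$ is then accomplished via strong approximation for the semisimple simply-connected group $\mathrm{SL}_1(A)$ (Kneser--Platonov). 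The main obstacle I anticipate is arranging the global patching to respect the linear trace constraint exactly: this reduces to strong approximation on the smooth $K$-variety obtained as the fiber of $(\alpha, \beta) \mapsto \Trd(\alpha) + \Trd(\beta)$ over $x - r$ inside $\mathrm{SL}_1(A) \times \mathrm{SL}_1(A)$, and the delicate point is verifying that the fiber has the required smooth local point at every place and that the approximation theorem still applies in the presence of the explicit trace equation.
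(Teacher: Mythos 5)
The paper's proof of this proposition is a one-line citation to Proposition~2.7 of Dittmann \cite{Dit16}, so your attempt to reprove it from scratch is a genuinely different, and far more ambitious, route. The containment $S_A \subseteq T_{a,b}$ via the unique maximal order of the local division algebra and integrality of the reduced trace is fine. However, the remark that one can ``choose the representatives $R$ inside $T_{a,b}$'' misses a constraint: for the diophantine definition to be uniform --- and it must be, since in Lemmas~\ref{not1,1} and~\ref{1,1} one argument of $T$ ranges over $K^*$ --- the set $R$ has to be fixed once and for all, independently of $(a,b)$. One should instead place $R$ in $\OO_K$ (or a fixed holomorphy ring in the function-field case) and argue that this forces $R \subseteq T_{a,b}$ for every pair $(a,b)$, since $T_{a,b}$ depends on $(a,b)$ through $\Delta_{a,b}$.

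The reverse containment is where the real work happens, and there are two genuine gaps. First, at ramified primes $\pp$ with $|\FF_\pp| < B$ you assert that choosing $r$ with $x \equiv r \pmod{\pp}$ makes $x - r$ locally a sum of two reduced traces of reduced-norm-one elements, but you never establish $\pp\OO_{(\pp)} \subseteq S_{A\otimes K_\pp} + S_{A\otimes K_\pp}$ at those primes; the character-sum estimate only yields surjectivity of $\Trd$ on the norm-one locus modulo $\pp$ when $|\FF_\pp|$ is large relative to $\ell$, and a separate lifting argument is needed for the finitely many small ramified primes. Second, and more seriously, the global step --- producing $(\alpha,\beta)\in \mathrm{SL}_1(A)(K)^2$ matching prescribed local data \emph{and} satisfying the exact equation $\Trd(\alpha)+\Trd(\beta)=x-r$ --- does not follow from Kneser--Platonov strong approximation for $\mathrm{SL}_1(A)\times\mathrm{SL}_1(A)$: strong approximation applies to the simply connected group, not to the fiber of $\Trd+\Trd$ over $x-r$, which is not a group and carries no such approximation theorem off the shelf. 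You flag this yourself as ``the main obstacle I anticipate,'' and you are right to: it is the crux of Dittmann's proposition, not a routine verification. In the $\ell=2$ case the analogous fiber is a conic and Hasse--Minkowski closes the argument; for odd $\ell$ this local--global statement for the trace fiber requires an argument of its own, which your sketch does not supply.
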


\begin{proof}
This follows immediately from Proposition 2.7 of \cite{Dit16}.
\end{proof}

\subsection{Diophantine semi-local rings and their Jacobson radicals}
Again, fix a prime number $\ell$, a global field $K$ such that
$\ch(K)\neq \ell$, and a primitive $\ell$th root of unity
$\omega\in K$. In this section, we will show that certain semi-local
rings are diophantine. We will also show that their Jacobson radicals
contain an ideal which is diophantine. 

\begin{lem}\label{vpdivbyell}
Let $a,b\in K^{\times}$. Then 
\[
K^{\times \ell} \cdot T_{a,b}^{\times} = \bigcap_{\pp\in \Delta_{a,b}} v_{\pp}^{-1}(\ell\ZZ).
\]
\end{lem}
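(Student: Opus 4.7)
The plan is to unpack the semi-local ring $T_{a,b}$ and apply weak approximation. First I would note that the units of this ring satisfy
\[
T_{a,b}^{*} = \{u\in K^{*}: v_{\pp}(u)=0 \text{ for all } \pp\in \Delta_{a,b}\},
\]
since for $u\in K^{*}$, both $u$ and $u^{-1}$ lie in $\bigcap_{\pp\in \Delta_{a,b}}\OO_{(\pp)}$ precisely when $v_{\pp}(u)=0$ at every prime of the finite set $\Delta_{a,b}$.

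For the forward inclusion, given $x=c^{\ell}u$ with $c\in K^{*}$ and $u\in T_{a,b}^{*}$, I would simply compute $v_{\pp}(x) = \ell v_{\pp}(c) + v_{\pp}(u) = \ell v_{\pp}(c)\in \ell\ZZ$ for every $\pp\in \Delta_{a,b}$, using the characterization of $T_{a,b}^{*}$ just noted.

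For the reverse inclusion, suppose $x\in K^{*}$ satisfies $v_{\pp}(x)\in \ell\ZZ$ for every $\pp\in \Delta_{a,b}$. Since $\Delta_{a,b}$ is finite, the independence-of-valuations theorem (weak approximation) produces some $c\in K^{*}$ with $v_{\pp}(c)=v_{\pp}(x)/\ell$ for each $\pp\in \Delta_{a,b}$. Setting $u:=xc^{-\ell}$, one gets $v_{\pp}(u)=0$ at every such $\pp$, hence $u\in T_{a,b}^{*}$ and $x = c^{\ell}u \in K^{*\ell}\cdot T_{a,b}^{*}$.

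The main point to verify carefully is the identification of $T_{a,b}^{*}$ as the elements of valuation zero at each prime in $\Delta_{a,b}$; once that is in place, the rest is a one-line appeal to weak approximation together with the finiteness of $\Delta_{a,b}$ (which was recorded right after its definition). I therefore do not anticipate a substantive obstacle in executing this plan.
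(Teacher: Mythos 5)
Your proof is correct and follows essentially the same route as the paper's: compute valuations directly for the forward inclusion, and apply weak approximation to produce an $\ell$th power scaling factor for the reverse. The only (welcome) difference is that you explicitly record the characterization $T_{a,b}^{*}=\{u\in K^{*}:v_{\pp}(u)=0\ \forall\,\pp\in\Delta_{a,b}\}$, which the paper leaves implicit.
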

\begin{proof}
Suppose $x\in K^{\times \ell} \cdot T_{a,b}^{\times}$. Then we can write
$x=t^{\ell}\cdot u$ with $u\in T_{a,b}^{\times}$. Thus for any $\pp\in \Delta_{a,b}$, 
\[
v_{\pp}(x)=\ell \cdot v_{\pp}(t)+v_{\pp}(u)=\ell \cdot v_{\pp}(t).
\]
Now suppose that $x\in \bigcap_{\pp\in \Delta_{a,b}}
v_{\pp}^{-1}(\ell\ZZ)$. Then for all $\pp\in \Delta_{a,b}$, there exists
$k_{\pp}\in \ZZ$ such that $v_{\pp}(x) = \ell\cdot
k_{\pp}$. By weak approximation, 
there exists $t\in K^{\times}$ such that $v_{\pp}(t)=k_{\pp}$ for each $\pp\in
\Delta_{a,b}$. Then $v_{\pp}(x/t^{\ell})=0$ for each $\pp\in
\Delta_{a,b}$, so $u:=x/t^{\ell}\in T_{a,b}^{\times}$. Thus $x\in
K^{\times \ell} T_{a,b}^{\times}$. 
\end{proof}

Now suppose $a,b,c\in K^{\times}$ and define
\[
I_{a,b}^c:= c\cdot K^{\times\ell} \cdot T_{a,b}^{\times} \cap (1-K^{\times\ell} T_{a,b}^{\times}),
\]
and for $p\in K^{\times}$, define
\[
\PP(p):=\{\pp: v_{\pp}(p)\not\equiv 0 \mod \ell \}.
\]
\begin{lem}\label{IabcCalculation}
Suppose that $a,b,c\in K^{\times}$. Then
\begin{align*}
I_{a,b}^c= \{ x\in K &: v_{\pp}(x) \text{ is positive and } v_{\pp}(x)\equiv
                        v_{\pp}(c) \pmod{\ell} \text{ for }\pp \in \Delta_{a,b}\cap\PP(c), \\
&\text{ and } v_{\pp}(x),v_{\pp}(1-x)\equiv 0 \pmod{ \ell} \text{ for } \pp\in \Delta_{a,b}\setminus \PP(c)\}.
\end{align*}
\end{lem}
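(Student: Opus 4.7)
The plan is to translate each of the two membership conditions defining $I_{a,b}^c$ into explicit valuation conditions at the primes of $\Delta_{a,b}$ by using the previous lemma, and then combine them. By that lemma,
\[
K^{*\ell}\cdot T_{a,b}^{*}=\bigcap_{\pp\in\Delta_{a,b}} v_{\pp}^{-1}(\ell\ZZ),
\]
so $x\in c\cdot K^{*\ell}\cdot T_{a,b}^{*}$ is equivalent to $v_{\pp}(x)\equiv v_{\pp}(c)\pmod{\ell}$ for every $\pp\in\Delta_{a,b}$, and $x\in 1-K^{*\ell}\cdot T_{a,b}^{*}$ is equivalent to $v_{\pp}(1-x)\equiv 0\pmod{\ell}$ for every $\pp\in\Delta_{a,b}$. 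Thus $x\in I_{a,b}^{c}$ if and only if both of these congruence conditions hold simultaneously.

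Next I would split the primes of $\Delta_{a,b}$ according to whether they lie in $\PP(c)$. If $\pp\in\Delta_{a,b}\setminus\PP(c)$, then $v_{\pp}(c)\equiv 0\pmod{\ell}$, so the first condition reduces to $v_{\pp}(x)\equiv 0\pmod{\ell}$, and the second is $v_{\pp}(1-x)\equiv 0\pmod{\ell}$; these match the description on the right-hand side. If $\pp\in\Delta_{a,b}\cap\PP(c)$, the first condition gives $v_{\pp}(x)\equiv v_{\pp}(c)\not\equiv 0\pmod{\ell}$, and in particular $v_{\pp}(x)\neq 0$. I need to upgrade this to $v_{\pp}(x)>0$.

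This is where the non-archimedean triangle inequality enters, and it is the one nontrivial step. If $v_{\pp}(x)<0$, then $v_{\pp}(1)=0>v_{\pp}(x)$, so $v_{\pp}(1-x)=v_{\pp}(x)\not\equiv 0\pmod{\ell}$, contradicting the second condition. Hence $v_{\pp}(x)>0$. Conversely, once $v_{\pp}(x)>0$, we automatically get $v_{\pp}(1-x)=0\equiv 0\pmod{\ell}$, so the second condition at $\pp$ is redundant. This recovers precisely the description given on the right-hand side.

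The main obstacle, such as it is, lies in the last paragraph: one needs to notice that the two congruence conditions together cannot tolerate negative valuations of $x$ at primes in $\Delta_{a,b}\cap\PP(c)$, and then use the ultrametric inequality to eliminate that case. The rest of the argument is a routine bookkeeping of the valuation conditions coming from the previous lemma.
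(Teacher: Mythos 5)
Your proof follows the same route as the paper's: translate membership in $I_{a,b}^c$ into congruence conditions on valuations at $\Delta_{a,b}$ via the preceding lemma, split $\Delta_{a,b}$ according to membership in $\PP(c)$, and apply the ultrametric inequality to $1-x$ to rule out $v_{\pp}(x)<0$ at primes of $\Delta_{a,b}\cap\PP(c)$. Your formulation of the ultrametric step is in fact cleaner than the paper's, which writes the garbled expression $v_{\pp}(1-x)=\min\{1,-v_{\pp}(x)\}$; what you use, namely $v_{\pp}(1-x)=v_{\pp}(x)$ when $v_{\pp}(x)<0$, is the correct form.

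That said, your closing remark that this ``recovers precisely the description given on the right-hand side'' elides a discrepancy you should flag, one which the paper's own proof shares. At $\pp\in\Delta_{a,b}\cap\PP(c)$ the first membership condition is equivalent to $v_{\pp}(x)\equiv v_{\pp}(c)\pmod\ell$, exactly as you write; for $\ell>2$ this is strictly stronger than the condition $v_{\pp}(x)\not\equiv 0\pmod\ell$ that appears in the displayed set. So what you actually prove is that $I_{a,b}^c$ equals the set obtained by replacing ``$\not\equiv 0\pmod\ell$'' with ``$\equiv v_{\pp}(c)\pmod\ell$''. The forward inclusion of $I_{a,b}^c$ into the displayed set follows, but the reverse one does not: for $\ell>2$, an $x$ with $v_{\pp}(x)$ positive and lying in a nonzero residue class other than that of $v_{\pp}(c)$ modulo $\ell$ belongs to the displayed set yet has $v_{\pp}(x/c)\not\equiv 0\pmod\ell$, so $x\notin c\cdot K^{*\ell}\cdot T_{a,b}^{*}$. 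You should state the condition as $v_{\pp}(x)\equiv v_{\pp}(c)\pmod\ell$; the distinction is not merely cosmetic, since the reverse inclusion is precisely what is invoked in the proof of the next lemma.
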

\begin{proof}
First we will show that $I_{a,b}^c$ is contained in the right-hand
side. Let $x\in I_{a,b}^c$. For all $\pp\in
\Delta_{a,b}$, both $v_{\pp}(x/c)$ and $v_{\pp}(1-x)$ are divisible by
$\ell$ by the previous lemma. For any $\pp\in \PP(c)$, this implies $v_{\pp}(x)\equiv v_{\pp}(c)\not\equiv 0 \pmod
\ell$. In particular, $v_{\pp}(x)\not=0$, so
$v_{\pp}(1-x)=\min\{0,v_{\pp}(x)\}$. Because 
$v_{\pp}(1-x)\equiv 0 \pmod \ell$, it follows that
$v_{\pp}(x)>0$. If $\pp\not \in \PP(c)$, then $v_{\pp}(x)\equiv
v_{\pp}(c)\equiv 0 \pmod \ell$. 

Conversely, suppose $x$ is in the right-hand side. First assume
$\pp\in \Delta_{a,b}\cap\PP(c)$. Then
$v_{\pp}(x)\equiv v_{\pp}(c) \pmod{\ell}$. We also have
$v_{\pp}(1-x)=0$, since $v_{\pp}(x)>0$.  For any prime
$\pp\in \Delta_{a,b}\setminus \PP(c)$, we have
$v_{\pp}(x/c)\equiv 0\pmod \ell$. It follows that
$x\in c\cdot K^{\times\ell}\cdot T_{a,b}^{\times}$ by
Lemma~\ref{vpdivbyell}. We also have that
$v_{\pp}(1-x)\equiv 0 \pmod \ell$ for all
$\pp\in \Delta_{a,b}\setminus \PP(c)$, so
$1-x\in K^{\times \ell}\cdot T_{a,b}^{\times}$ again by
Lemma~\ref{vpdivbyell}.
\end{proof}
\begin{defn}\label{Jabdefforell}
For $a,b\in K^{\times}$, define
\[
J_{a,b}:=\bigcap_{\pp\in \Delta_{a,b}\cap(\PP(a)\cup\PP(b))} \pp^{k_{\pp}} \OO_{\pp},
\]
where $1\leq k_{\pp} \leq \ell-1$ is defined by 
\[
k_{\pp} = \max\left\{ v_{\pp}(a) - \ell\left\lfloor\frac{v_{\pp}(a)}{\ell}\right\rfloor,
v_{\pp}(b)-\ell\left\lfloor\frac{v_{\pp}(b)}{\ell}\right\rfloor\right\}.
\]
\end{defn}
\begin{lem}\label{lem:Jablemma}
Let $a,b,c\in K^{\times}$. For $\pp\in \Delta_{a,b}\cap \PP(c)$, set $r_{\pp}: = v_{\pp}(c) - \ell\lfloor\frac{v_{\pp}(c)}{\ell}\rfloor.$ Then
\[
I_{a,b}^c+I_{a,b}^c=\bigcap_{\pp\in\Delta_{a,b}\cap\PP(c)} \pp^{r_{\pp}}\OO_{\pp}.
\]
In particular, 
\[
J_{a,b}=(I_{a,b}^a+I_{a,b}^a)\cap (I_{a,b}^b+I_{a,b}^b)
\]
and is diophantine over $K$. 
\end{lem}
\begin{proof}
 If $x,y\in I_{a,b}^c$, then
for any $\pp\in \Delta_{a,b}\cap\PP(c)$, 
\[
v_{\pp}(x)\equiv v_{\pp}(y) \equiv r_{\pp} \pmod{\ell}
\]
by Lemma~\ref{IabcCalculation}.
Also, $v_{\pp}(x)\geq r_{\pp}$ and $v_{\pp}(y) \geq r_{\pp}$, because $v_{\pp}(x),v_{\pp}(y)>0$.  Thus $x+y\in
\bigcap_{\pp\in \Delta_{a,b}\cap\PP(c)}\pp^{r_{\pp}}\OO_{\pp}$. 

Conversely, suppose that $z\in \bigcap_{\pp\in\Delta_{a,b}\cap\PP(c)}
\pp^{r_{\pp}}\OO_{\pp}$, so $v_{\pp}(z)\geq r_{\pp}$ for each $\pp\in \Delta_{a,b}\cap \PP(c)$. 
We will use weak approximation to show there exists $y\in K^{\times}$
such that $y,z-y \in I_{a,b}^c$. For the primes
$\pp\in \Delta_{a,b}$, we require the following:
\begin{enumerate}
\item if $\pp\in \Delta_{a,b}\cap\PP(c)$ such that
  $v_{\pp}(z)>r_{\pp}$, then $v_{\pp}(y)=r_{\pp}$;
\item if $\pp\in \Delta_{a,b}\cap \PP(c)$ and $v_{\pp}(z)=r_{\pp}$, let $p\in \pp\OO_{\pp}\setminus \pp^2\OO_{\pp}$ and write $z=p^{r_{\pp}}u$, with $u\in \OO_{\pp}^{\times}$.
\begin{enumerate}
\item If $u\not\equiv 1\pmod{\pp}$, 
   we require $y\equiv p^{r_{\pp}}(u-1) \pmod{\pp^{r_{\pp}+1}}$.
\item If $u\equiv 1 \pmod{\pp}$, then $z=p^{r_{\pp}}(1+k)$ for some $k\in\pp\OO_{\pp}$. Choose $m$ such that $m\ell>v_{\pp}(k)$. 
We take $y\equiv p^{r_{\pp}}(1+k+p^{m\ell}) \pmod{\pp^{r_{\pp}+m\ell+1}}$.  
\end{enumerate}
\item For $\pp\in\Delta_{a,b}\setminus \PP(c)$,  we require $v_{\pp}(y)<\min\{0,v_{\pp}(z)\}$ and $v_{\pp}(y)\equiv 0 \pmod
  \ell$ for $\pp\in \Delta_{a,b}\setminus \PP(c)$.
\end{enumerate}
We claim that $y,z-y\in I_{a,b}^c$. For
$\pp\in \Delta_{a,b}\cap\PP(c)$, by construction we have that
$v_{\pp}(y)$ is positive and congruent to $r_{\pp}$ modulo $\ell$. For
$\pp\in\Delta_{a,b}\setminus \PP(c)$, (iii) ensures that $v_{\pp}(y) =
v_{\pp}(1-y) \equiv 0 \pmod{\ell}$. Thus $y\in I_{a,b}^c$. We will now show
that $z-y\in I_{a,b}^c$. First we claim that for $\pp\in
\Delta_{a,b}\cap \PP(c)$,  $v_{\pp}(z-y)>0$ and $v_{\pp}(z-y)\equiv v_{\pp}(c)
\pmod{\ell}$. We have that $v_{\pp}(z-y)\geq
\min\{v_{\pp}(z),v_{\pp}(y)\}>0$.  Additionally, we have that
$v_{\pp}(z-y)\equiv r_{\pp} \pmod{\ell}$ by (1) and (2) above, and by definition, $v_{\pp}(c)\equiv r_{\pp} \pmod{\ell}$. For
$\pp\in \Delta_{a,b}\setminus\PP(c)$, because
$v_{\pp}(y)\equiv 0 \pmod \ell$ and $v_{\pp}(y)<v_{\pp}(z)$, it
follows that 
$v_{\pp}(z-y)=v_{\pp}(y)\equiv 0 \pmod \ell$. Finally, from $v_{\pp}(z-y)<0$,
it follows that
\[
v_{\pp}(1-(z-y))=v_{\pp}(z-y)=v_{\pp}(y)\equiv 0 \pmod \ell.
\] 
We conclude that $z=y+(z-y)\in I_{a,b}^c+I_{a,b}^c$ by Lemma~\ref{IabcCalculation}. 

\end{proof}

\subsection{Partitioning the primes of $K$}
One main step in the proof that the non-norms are diophantine is
to partition the primes of a finite number of sets. We partition the
primes using ray classes for a fixed abelian extension of $K$, which
we now describe. The following proposition outlines the properties of
this extension. 
 
\begin{prop}\label{fixab}
  Suppose that $\ell$ is an odd prime number and that $K$ is a global
  field with $\ch(K)\neq \ell$. There exist $a,b\in K^{\times}$ satisfying the following
  conditions:
\begin{enumerate}[(i)]
\item $(a)$ and $(b)$ are coprime;
\item $a,b$ generate distinct, nontrivial subgroups of $K^{\times}/K^{\times\ell}$;
\item If $K$ is a number field, $a,b \in 1+\ell^{3}\OO_K$;
\item Set $L:=K(\sqrt[\ell]{a},\sqrt[\ell]{b})$.
  Suppose that $\qq$ is a prime of $K$ which is unramified in $L$.
 Set $A:=\OO_K$ if $K$ is a number field, and
  $A:=\OO_{\{\qq\}}$ if $K$ is a global function field. Given
  $\sigma\in \Gal(L/K)$, and an ideal class $ \mathcal{C} \in \Cl(A)$, there
  exists a prime $\pp$ of $K$ unramified in $L$
  such that $(\pp,L/K)=\sigma$ and $\pp\cap A\in \mathcal{C}$. 
\end{enumerate}
\end{prop}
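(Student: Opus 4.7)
My plan is to choose $a$ and $b$ as generators of two distinct principal primes of $K$ (or analogous principal divisors in the function-field case) lying in a ray class designed to force (iii) in the number-field setting. Conditions (i), (ii), (iii) will then hold essentially by construction, and (iv) will follow from Chebotarev's density theorem once I verify $L \cap K^A = K$.

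For the construction, in the number-field case let $\mm = (\ell^3)$. Via the Artin isomorphism $\Cl_{\mm}(\OO_K) \simeq \Gal(K_{\mm}/K)$, the trivial class of $\Cl_{\mm}(\OO_K)$ contains infinitely many primes, namely the principal primes $\pp = (\pi)$ with $\pi \in K_{\mm,1}$; such $\pp$ are automatically coprime to $\ell$, so the generator $\pi$ lies in $1 + \ell^3 \OO_K$. Pick two distinct such primes $\pp_1, \pp_2$ and corresponding generators $a, b$. Then (i) and (iii) are immediate, and (ii) follows because $v_{\pp_1}(a) = v_{\pp_2}(b) = 1$ is coprime to $\ell$, forcing $a, b \notin K^{*\ell}$ and $ab^{-1} \notin K^{*\ell}$. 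In the function-field case, a parallel application of Theorem \ref{folkthms} (to $A = \OO_\qq$) produces $a, b \in K^*$ with disjoint divisorial supports (and disjoint from $\{\qq\}$), each with some valuation equal to $1$; condition (iii) is vacuous.

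For (iv), set $L := K(\sqrt[\ell]{a}, \sqrt[\ell]{b})$; by (ii) and Kummer theory, $G \simeq (\ZZ/\ell\ZZ)^2$. Since the modulus defining $K^A$ is trivial, $K^A/K$ is unramified at every prime of $K$, so $L \cap K^A$ is a subextension of $L/K$ unramified everywhere over $K$. Next, since $\pp_1 \nmid \ell$ (automatic in the function-field case, enforced by (iii) in the number-field case) and $v_{\pp_1}(a) = 1$, the polynomial $x^\ell - a$ is Eisenstein at $\pp_1$, so $K(\sqrt[\ell]{a})/K$ is totally ramified at $\pp_1$, while $v_{\pp_1}(b) = 0$ and $\pp_1 \nmid \ell$ keep $K(\sqrt[\ell]{b})/K$ unramified at $\pp_1$. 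Hence the inertia subgroup at $\pp_1$ in $L/K$ is $\Gal(L/K(\sqrt[\ell]{b}))$, and symmetrically the inertia at $\pp_2$ is $\Gal(L/K(\sqrt[\ell]{a}))$. These are two distinct cyclic subgroups of order $\ell$ in $G \simeq (\ZZ/\ell\ZZ)^2$ and therefore generate $G$, so the largest subextension of $L$ unramified everywhere over $K$ is $K$; hence $L \cap K^A = K$. Consequently $\Gal(L \cdot K^A / K) \simeq G \times \Gal(K^A/K)$, and given any $(\sigma, \mathcal{C}) \in G \times \Cl(A)$, Chebotarev's density theorem applied to the abelian extension $L \cdot K^A / K$ produces infinitely many primes $\pp$ of $K$ unramified in $L \cdot K^A$ whose Frobenius restricts to $\sigma$ on $L$ and to the image $\psi(\mathcal{C})$ of $\mathcal{C}$ under the Artin isomorphism on $K^A$. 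Such $\pp$ satisfy $(\pp, L/K) = \sigma$ and $\pp \cap A \in \mathcal{C}$, which is what (iv) asserts.

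The crux is the identification $L \cap K^A = K$, which requires both the unramifiedness of $K^A/K$ (from the trivial modulus defining it) and that the inertia groups at $\pp_1, \pp_2$ together generate $G$. The latter is the structural reason for insisting that $a, b$ be generators of distinct coprime principal primes rather than arbitrary elements satisfying (i)--(iii); otherwise the Frobenius data $(\sigma, \mathcal{C})$ would be constrained by the common restriction to a nontrivial intersection $L \cap K^A$, and condition (iv) could fail.
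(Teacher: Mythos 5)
Your proof is correct and follows essentially the same strategy as the paper's: choose $a,b$ so that each has a prime of valuation exactly $1$ with disjoint, coprime supports (and, in the number-field case, lying in $1+\ell^3\OO_K$ via a ray class argument), then observe that every nontrivial subextension of $L/K$ is ramified at one of those primes, so $L$ is linearly disjoint from the Hilbert class field $H$ (resp.\ $K^A$), and finally apply Chebotarev to $HL/K$. The two genuine differences are cosmetic. First, you require $(a)$ and $(b)$ to each be a prime ideal, whereas the paper only insists on $v_{\pp_0}(a)=1$ and $v_{\pp_1}(b)=1$ with $(a),(b)$ coprime and allows $(a),(b)$ to have other prime factors; both suffice, and the paper's weaker condition is what should be read off as essential. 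Second, you phrase the linear disjointness via inertia groups at $\pp_1,\pp_2$ generating $G$, while the paper enumerates the $\ell+1$ intermediate fields $K(\sqrt[\ell]{a})$, $K(\sqrt[\ell]{b})$, $K(\sqrt[\ell]{ab^j})$ and checks each is ramified; these are equivalent, and your inertia-group phrasing is arguably the cleaner of the two. Your closing remark somewhat overstates the necessity of $(a),(b)$ being principal primes: what is really needed, and what the paper arranges, is merely a prime at which $a$ has valuation $1$ (and symmetrically for $b$), since that already forces $K(\sqrt[\ell]{a})/K$ to ramify there and keeps $K(\sqrt[\ell]{b})/K$ unramified there, which is all the inertia argument requires.
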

\begin{proof}
First assume that $K$ is a number field. Fix a prime $\pp_0$ of $K$ which does not divide $\ell\OO_K$. There exists $a\in K^{\times}$ such that $a\in 1+\ell^{3}\OO_K$
and $v_{\pp_0}(a)=1$. To choose $b$, fix a different prime $\pp_1\neq \pp_0$ not
dividing $\ell\OO_K$. There exists $b\in
K^{\times}$  such that
\begin{itemize}
\item  $b\in 1+\ell^{3}\OO_K$,
\item  $v_{\pp}(b)=0$ for any $\pp|(a)$,
\item and $v_{\pp_1}(b)=1$. 
\end{itemize}
If $K$ is a global function field, similarly choose two primes
$\pp_0,\pp_1\not=\qq$ and $a,b\in K^{\times}$ such that
$v_{\pp_0}(a)=1$, $v_{\pp_1}(a)=0$, $v_{\pp'}(b)=0$ if $\pp'|(a)$, and
$v_{\pp_1}(b)=1$. 

Thus (i) is satisfied, because $b$ is chosen so that
$(b)$ is coprime to $(a)$.  We also have that (ii) holds because
$v_{\pp_0}(a)\not\equiv 0\pmod \ell$ and
$v_{\pp_1}(b)\not\equiv 0 \pmod \ell$, so both $a$ and $b$ generate
nontrivial subgroups of $K^{\times}/K^{\times \ell}$. These subgroups
are distinct, because and $v_{\pp_0}(a^m/b^n)\not\equiv 0 \pmod{\ell}$
for $1\leq m,n\leq \ell-1$. By construction, (iii) holds as well if
$K$ is a number field.

We now show that $(iv)$ holds if $K$ is a number field. Let $H$ denote the Hilbert class field of $K$. We claim that $H$ and $L$ are linearly disjoint. This holds
because all intermediate fields between $K$ and $L$ are ramified at
some prime of $K$. Indeed, they are of the form
$K(\sqrt[n]{a}),K(\sqrt[\ell]{b}),$ or $K(\sqrt[\ell]{ab^j})$ for some
$j=1,2,\ldots,\ell-1$ and each ramifies at either $\pp_0$ or $\pp_1$. Thus 
\[
\Gal(HL/K)\simeq \Gal(L/K)\times \Cl(K)
\]
so (iv) follows by the Chebotarev density theorem.

If $K$ is a global function field and $\qq$ is a prime of $K$ which is
unramified in $L$, the same argument works by replacing $H$ with
$K^A$, the maximal unramified abelian extension of $K$ in which $\qq$
splits completely. 
\end{proof}

We now fix $a,b\in K^{\times}$ as in Proposition \ref{fixab} and an
admissible modulus $\mm$ of $K$ for $L$ whose support contains all
primes in the support of $(\ell ab)$. 
We need to fix two extra constants
$c,d\in K$.
\begin{lem}\label{choosec,dforell}
Let $\omega\in K$ be a primitive $\ell$th root of unity. There exists $c,d\in K^{\times}$ such that
$(a,c)_{K_{\pp},\ell}=\omega$ for each prime $\pp\in \PP(a)$, 
$(b,d)_{K_{\qq},\ell}=\omega$ for each prime $\qq\in \PP(b)$, and 
\[
(\PP(a)\cup\PP(b))\cap \PP(c) = (\PP(a)\cup\PP(b))\cap \PP(d)= \emptyset.
\]

\end{lem}
\begin{proof} 
  Let $\pp_1,\ldots,\pp_r$ be the primes of $\PP(a)$. If
  $r\not\equiv 0\pmod{\ell}$, then choose primes
  $\pp_{r+1},\ldots,\pp_s$ such that for $r+1\leq t \leq s$, $\pp_t$ is coprime to $(a)$ and
  $(b)$, $\left(\frac{a}{\pp_t}\right)_{\ell}=\omega$, and $s \equiv 0 \pmod{\ell}$. Set
  $P:=\{\pp_1,\ldots,\pp_s\}$ in this case, and $P:=\PP(a)$ otherwise.
 For each prime $\pp \in \PP(a)\cup \PP(b)$, let $x_{\pp}\in
K^{\times}$ satisfy $v_{\qq}(x_{\pp})=0$ for each $\qq \in \PP(a)\cup \PP(b)$, and
$\left(\frac{x_{\pp}}{\pp}\right)_{\ell} = \omega$. Then we will use
  Theorem~\ref{prescribesymbols} to show there exists $c\in K^{\times}$
  such that 
\begin{itemize}
\item $(a,c)_{K_{\pp},\ell}=\omega$ for each $\pp\in P$;
\item $(a,c)_{K_{\pp},\ell}=1$ for all other primes $\pp$ of $K$; and
\item for each $\pp\in \PP(a)$, $(x_{\pp},c)_{K_{\qq},\ell} =1$ for
  all primes $\qq$ of $K$.
\end{itemize}
 It is clear
  that conditions (1) and (2) of Theorem~\ref{prescribesymbols} are
  satisfied, and now we show the existence of local elements. Let
  $\pp\in \PP(a)$ and let $k_{\pp}$ be the multiplicative inverse of
  $v_{\pp}(a)$ modulo $\ell$. Then we can choose $c_{\pp}\in K^{\times}$
  such that $v_{\qq}(c_{\pp})=0$ for each $\qq\in \PP(a)$ and such that
  $\left(\frac{c_{\pp}}{\pp}\right)_{\ell}=\omega^{k_{\pp}}$. Then
\[
(a,c_{\pp})_{K_{\pp},\ell}=(\omega^{k_{\pp}})^{v_{\pp}(a)}=\omega
\]
by Equation~\ref{hilbertformula}. For each $\qq\in \PP(a)$, because $x_{\qq}$ and $c_{\pp}$ are $\qq$-adic units, we have $(x_{\qq},c_{\pp})_{K_{\qq},\ell}=1$.  If
$\pp\in P\setminus \PP(a)$, we can choose any
$c_{\pp}\in K^{\times}$ such that $v_{\pp}(c_{\pp})=1$ and $v_{\qq}(c_{\pp})=0$ for any $\qq \in \PP(a)$. Then
\[
(a,c_{\pp})_{K_{\pp},\ell}=\left(\frac{a}{\pp}\right)_{\ell}=\omega,
\]
 again by Equation~\ref{hilbertformula}, 
 and
 \[
 (x_{\qq},c_{\pp})_{K_{\pp},\ell}=1
 \]
 because $x_{\qq},c_{\pp}$ are $\qq$-adic units for each $\qq\in \PP(a)$. For any prime $\pp\not \in
 P$, let $c_{\pp}$ be a $\pp$-adic $\ell$th power; then
 $(a,c_{\pp})_{K_{\pp},n}=1=(x_{\qq},c_{\pp})_{K_{\qq},\ell}$ for each $\qq\in \PP(a)$. Finally let $\pp$ be a prime not in $P$. Then we choose $c_{\pp}\in K$ which is a $\pp$-adic $\ell$th power, so $(a,c_{\pp})_{K_{\pp},\ell}=1=(x_{\qq},c_{\pp})_{K_{\qq},\ell}$ for each $\qq \in \PP(a)$. 
 
 Finally we observe the third item implies that for each
 $\pp\in \PP(a)\cup \PP(b)$, $\pp\not\in\PP(c)$ by
 Equation~\ref{hilbertformula}. The proof for the existence of $d$ is
 similar.
\end{proof}

We now fix a primitive $\ell$th root of unity $\omega \in K$ and constants $c,d\in K$ with the properties guaranteed by
Lemma~\ref{choosec,dforell}. Enlarge the modulus $\mm$ of $K$ for $L$
by any primes $\pp$ dividing $(c)$ or $(d)$ and any primes $\pp$ such
that $(a,c)_{K_{\pp},\ell}\not=1$ or $(b,d)_{K_{\pp},\ell}\not=1$.

We will connect the splitting behavior of
primes of $K$ in $L$ with ramification of cyclic algebras over
$K$. We identify $\Gal(L/K)\simeq \mu_{\ell}\times \mu_{\ell}$; let
$\omega$ be a generator of $\mu_{\ell}$. First, we partition
$\Gal(L/K)$ depending on whether the restriction of $\sigma\in
\Gal(L/K)$ to $K(\sqrt[\ell]{a})$ or $K(\sqrt[\ell]{b})$ is trivial or
not.

\begin{defn}$\left.\right.$
\begin{itemize}
\item $C_{(1,1)}:=\{(1,1)\}$;
\item $C_{(-1,-1)}:=\{(\omega^i,\omega^j): i,j\not=0\}$;
\item $C_{(1,-1)}:=\{(1,\omega^j):j\not=0\}$;
\item $C_{(-1,1)}:=\{(\omega^i,1):i\not=0\}$.
\end{itemize}
 \end{defn}
Now we partition the primes of $K$ depending on whether or not they
split completely in $K(\sqrt[\ell]{a})$ or $K(\sqrt[\ell]{b})$. For a
prime $\pp$ of $K$ and $p\in K^{\times}$ and for $i,j=\pm1$, set
\[
\PP^{i,j}:=\{\pp:\psi_{L/K}(\pp)\in C_{i,j}\}.
\]
and  
\[
\PP^{i,j}(p):=\PP(p)\cap \PP^{i,j}.
\]

\begin{prop}\label{DeltaComputation}
Suppose $a,b\in K^{\times}$ and a modulus $\mm$ are as in
\ref{fixab}. Given $p\in I(\mm)$, we have that 
\begin{align*}
\PP^{(-1,-1)}(p)&=\Delta_{a,p}\cap \Delta_{b,p}, \\ 
\PP^{(-1,1)}(p)&=\left(\bigcap_{k=0}^{\ell-1}\Delta_{ab^k,p}\right)
\cap \left(\bigcap_{k=1}^{\ell-1}\Delta_{a,c^kp}\right), \\ 
\PP^{(1,-1)}(p)&=\left(\bigcap_{k=1}^{\ell-1}\Delta_{a^kb,p}\right)
\cap \left(\bigcap_{k=1}^{\ell-1}\Delta_{b,d^kp}\right).
\end{align*}
\end{prop}

\begin{proof}
We will begin with the first equality. First, no primes $\pp|\mm$
occur in $\Delta_{a,p}\cap\Delta_{b,p}$, since if
$(a,p)_{K_{\pp},\ell}\not=1$ we must have that $\pp\nmid \ell$ and
$v_{\pp}(a)\not\equiv 0\pmod{\ell}$. But then, because $(a),(b)$ are
coprime, we have $(b,p)_{K_{\pp},\ell}=1$. Now suppose $\pp\nmid\mm$; then we 
have $v_{\pp}(a)=v_{\pp}(b)=0$. From Equation \ref{hilbertformula}, 
$(a,p)_{K_{\pp},\ell}=\left(\frac{a^{v_{\pp}(p)}}{\pp}\right)_{\ell}\neq 1$  if and
only if $\pp\in\PP(p)$ and $\left(\frac{a}{\pp}\right)_{\ell}\not=1$. 
Similarly, $(b,p)_{K_{\pp},\ell}=\left(\frac{b}{\pp}\right)_{\ell}^{v_{\pp}(p)}$ is not
$1$ if and only if $\pp\in\PP(p)$ and
$\left(\frac{b}{\pp}\right)_{\ell}\not=1$. Because 
$\psi_{L/K}(\pp)=\left(\left(\frac{a}{\pp}\right)_{\ell},\left(\frac{b}{\pp}\right)_{\ell}\right)$, 
for any $\pp\nmid\mm$, we have that
 $\pp \in \Delta_{a,p} \cap \Delta_{b,p}$ if and only if $\pp\in\PP^{(-1,-1)}(p)$.

 Now we will prove the second equality, as the proof of the third
 equality is similar to this case. Assume first that
 $\pp\in \PP^{(-1,1)}(p)$, we will show that $\pp$ is in the right
 hand side of the second equality. We compute
 $(a,p)_{K_{\pp},\ell}\not=1$ and $(b,p)_{K_{\pp},\ell}=1$. Thus for
 every integer $0\leq k \leq \ell-1$, we have
 $(ab^k,p)_{K_{\pp},\ell}\not=1$ and hence
 $\pp \in \bigcap_{k=0}^{\ell-1}\Delta_{ab^k,p}$. We also note that
 $(a,c)_{K_{\pp},\ell}=1$ by the construction of $c$ in the proof of
 Lemma~\ref{choosec,dforell}. Thus for $0\leq k \leq \ell-1$,
 $\pp\in \Delta_{a,c^kp}$.
 
 Now we will show the reverse inclusion. Suppose that $\pp|\mm$. We will show that $\pp$
 is not in the set on the right-hand side in the second
 equality. First, if $K$ is a number field and $\pp|\ell\OO_K$, we have that
 $(a,p)_{K_{\pp},\ell}=1$ because $a$ is a $\pp$-adic $\ell$-th
 power. If $\pp\in \PP(a)$, then $(a,c)_{K_{\pp},\ell}=\omega$. If additionally $(a,p)_{K_{\pp},\ell}\not=1$,
 then there is some $1\leq k\leq \ell-1$ such that 
\[
(a,c^kp)_{K_{\pp},\ell} = (a,p)_{K_{\pp},\ell} (a,c)_{K_{\pp},\ell}^k
= 1
\]
and thus $\pp\not\in \bigcap_{k=0}^{\ell-1} \Delta_{a,c^kp}$. If
$\pp|\mm$ but $\pp\not\in \PP(a)$, then $(a,p)_{K_{\pp},\ell}=1$ since
$v_{\pp}(a)\equiv 0 \pmod \ell$ and
$v_{\pp}(p)=0$. Thus $\pp\not\in \Delta_{a,p}$, so we conclude
that the right hand side in the second equality contains no primes
dividing $\mm$. If $\pp$ does not divide $\mm$ and $\pp\in
\bigcap_{k=0}^{\ell-1}\Delta_{ab^k,p}$ then $\pp\in
\PP^{(-1,1)}(p)$. Indeed, because $(a,p)_{K_{\pp},\ell}\not=1$, we
must have that $\pp\in \PP(p)$ and 
$\psi_{L/K}(\pp)\in C_{-1,-1}\cup C_{-1,1}$. We must have
$(b,p)_{K_{\pp},\ell}=1$, because otherwise for some $1\leq k \leq
\ell-1$ we would have $(ab^k,p)_{K_{\pp},\ell}=1$. Thus
$\psi_{L/K}(\pp)\in C_{-1,1}$, so $\pp\in \PP^{(-1,1)}(p)$. 

\end{proof}

\section{Controlling Integrality with Cyclic Algebras}\label{sec:integrality}

We maintain the notation of the previous section: $K$ is a global
field, $\ell$ is an odd prime, $K$ contains $\mu_{\ell}$,  we choose $a,b,c,d\in K^{\times}$
 and the extension $L/K$  so that they satisfy the properties guaranteed by Proposition~\ref{fixab} and
Lemma~\ref{choosec,dforell}, and $\mm$ is an admissible modulus of $K$ for $L$
containing the primes dividing $(\ell),(a),(b),(c)$ and $(d)$. Below, we define the semi-local rings of $K$ which are diophantine and whose primes will have certain splitting behavior in $L/K$. 

\begin{defn}\label{rpdefn}
Let $p,q\in K^{\times}$ and define 
\begin{align*}
R_p^{(-1,-1)}&:= T_{a,p}+T_{b,p},\\ 
R_p^{(-1,1)}&:= \sum_{k=0}^{\ell-1}T_{ab^k,p}+T_{a,c^kp},\\ 
R_p^{(1,-1)}&:= \sum_{k=0}^{\ell-1}T_{a^kb,p}+T_{b,d^kp}\\ 
R_{p,q}^{(1,1)}&:= T_{ap,q}+T_{bp,q}.
\end{align*}
\end{defn}

\begin{prop}\label{rpdio}
Let $p\in K^{\times}$ such that $(p) \in I(\mm)$. Then
\begin{align*}
R_p^{(-1,-1)}&=\bigcap_{\pp\in \PP^{(-1,-1)}(p)} \OO_{\pp},\\ 
R_p^{(-1,1)}&= \bigcap_{\pp\in \PP^{(-1,1)}(p)} \OO_{\pp}, \\ 
R_p^{(1,-1)}&= \bigcap_{\pp\in \PP ^{(1,-1)} (p)} \OO_{\pp}.\\ 
\end{align*}
\end{prop}
\begin{proof}
The proof follows from Definitions~\ref{rpdefn} and~\ref{def:Tab} and Corollary~\ref{DeltaComputation}.
\end{proof}

\subsection{Integrality at $\pp$ with $\psi_{L/K}(\pp)\not=(1,1)$}
We now define the sets which will let us parametrize a family of
diophantine semi-local rings whose primes all are contained in the
same union of ray classes for the modulus $\mm$. 
\begin{defn}
For $i,j=\pm 1$, define
\[
\Phi_{(i,j)}:=\{p\in K^{\times}: (p) \in I(\mm), \psi_{L/K}((p))\in C_{(i,j)}, \PP(p)\subseteq \PP^{(1,1)}\cup\PP^{(i,j)}\}.
\]
\end{defn}
\begin{defn}
Let $(i,j)\in \{(-1,-1),(1,-1),(-1,1)\}$. For $\pp\in \PP^{(i,j)}(p)$, define $r_{\pp}:=v_{\pp}(p) - \ell \lfloor \frac{v_{\pp}(p)}{\ell}\rfloor$ and
\[
J_p^{(i,j)} := \bigcap_{\pp\in \PP^{(i,j)}(p)} \pp^{r_{\pp}}\OO_{\pp}.
\]

\end{defn}

\begin{rmk}
  If $\pp\in \PP^{(i.j)}(p)$, then
  $v_{\pp}(p)\not\equiv 0 \pmod \ell$, so
  $r_{\pp}:=v_{\pp}(p) - \ell \lfloor \frac{v_{\pp}(p)}{\ell}\rfloor$
  satisfies $1\leq r_{\pp} \leq \ell-1$. Additionally, if
  $(i,j)\in \{(-1,-1),(1,-1),(-1,1)\}$, then $J_p^{(i,j)}$ is
  contained in the Jacobson radical $J(R_p^{(i,j)})$ of
  $R_p^{(i,j)}$. In particular, any element of $J_p^{(i,j)}$ is
  integral at all primes of $\PP^{(i,j)}(p)$.
\end{rmk}

\begin{lem}\label{cyclicnot1,1}$\left. \right.$
\begin{enumerate}[(a)]
\item For $i,j=\pm 1$, $\Phi_{(i,j)}$ is diophantine over $K$. 
\item For $(i,j)\not=(1,1)$, given $p\in \Phi_{(i,j)}$,
  $\PP^{(i,j)}(p)$ is nonempty. Furthermore, $J_p^{(i,j)}$ is diophantine. 
\item For $(i,j)\not=(1,1)$, Given $\pp$ with
  $\psi_{L/K}(\pp)\in C_{(i,j)}$, there exists $p\in \Phi_{(i,j)}$ with
  $\PP ^{(i,j)} (p)=\{\pp\}$. Additionally,
  $v_{\pp}(p) \equiv 1 \pmod{\ell}$, so
  $J_p^{(i,j)} = J(R_p^{(i,j)})$, the Jacobson radical of
  $R_p^{(i,j)}$.
\end{enumerate}
\end{lem}

\begin{proof}
To prove (a), we will first show that $\{p: (p)\in I(\mm)\}$ is
diophantine over $K$. First, the local rings $\OO_{\pp}$ are all 
diophantine over $K$. Choose $t\in K$ with $v_{\pp}(t)=1$; then 
$\pp^{\mm(\pp)}\OO_{\pp}=t^{\mm(\pp)}\OO_{\pp}$ is also
diophantine over $K$. We have that $K_{\mm,1}$ is diophantine 
over $K$ because 
\[
K_{\mm,1}=\bigcap_{\pp|\mm_0} 1+\pp^{\mm(\pp)}\OO_{\pp}
\]
and $\pp^{\mm(\pp)}\OO_{\pp}$ is diophantine over $K$ by Lemma 3.22 of~\cite{Shl94}. 
Next we claim that $\{p:(p)\in I(\mm)\}$ is  a
 finite union of $K^{\times}$-translates of $K_{\mm,1}$. This is because $C(\mm)$ is finite if $K$ is a number field, and because the subgroup of degree $0$ classes of $C(\mm)$ is finite if $K$ is a global function field.  Thus $\{p:(p) \in I(\mm)\}$ is diophantine over $K$ because $K_{\mm,1}$ is diophantine over $K$. 

 Now observe that $\PP(p)\subseteq \PP^{(1,1)}\cup\PP^{(i,j)}$ if and
 only if $\PP^{(i',j')}(p)=\emptyset$ for $(i',j')\not=(1,1),(i,j)$. This is equivalent to requiring that $p$ is in
 $(R_p^{(i',j')})^{\times}\cap (R_p^{(i'',j'')})^{\times}$, where $(i',j'),(i'',j'')$
 are the elements of $\{(\pm 1, \pm 1)\}$ which are not $(1,1)$ or
 $(i,j)$. Thus $\{p: \PP(p)\subseteq \PP^{(1,1)}\cup \PP^{(i,j)}\}$ is
 diophantine over $K$ because $(R_p^{(i',j')})^{\times}$ is diophantine
 over $K$ by Definiton~\ref{rpdefn} and Proposition~\ref{prop:Tabdio}.

Now we prove (b). Let $(p)=\prod \pp_s^{e_s}$ be the factorization of
$(p)$. Then for each $s$, we have $\psi_{L/K}(\pp_s)\in C_{(i,j)}$ or
$\psi_{L/K}(\pp_s)\in C_{(1,1)}$. Observe that for some $s$, we must
have that $\psi_{L/K}(\pp_s)\in C_{(i,j)}$ and  
$e_i\not\equiv 0 \pmod \ell$, because otherwise
$\psi_{L/K}((p))=(1,1)$. We conclude that $\pp_s\in \PP^{(i,j)}(p)$. We now use Lemma~\ref{lem:Jablemma} to show $J_p^{(i,j)} \subseteq J(R_p^{(i,j)})$ is diophantine over $K$. If $(i,j)=(-1,-1)$ then $\PP^{(-1,-1)}(p)=\Delta_{a,p}\cap\Delta_{b,p}$ and 
$J_p^{(-1,-1)}=J_{a,p}+J_{b,p}$. For $(i,j)=(-1,1)$,  we have that 
\[
\PP^{(-1,1)}(p) = \bigcap_{k=0}^{\ell-1}
\Delta_{ab^k,p}\cap\Delta_{a,c^kp}
\]
and hence
\[
J_p^{(-1,1)} = \sum_{k=0}^{\ell-1} J_{ab^k,p}+J_{a,c^kp}.  
\]
by Proposition~\ref{DeltaComputation} and
Definition~\ref{Jabdefforell}. We have a similar expression for
$J(R_p^{(1,-1)})$. Thus $J_p^{(i,j)}$ is diophantine by
Lemma~\ref{lem:Jablemma} for $(i,j)\not=(1,1)$. 

Finally we prove (c). If $K$ is a global function field, let $\pp_0\nmid \mm$ satisfy
$\psi_{L/K}(\pp_0)=(1,1)$, and set $A:=\OO_{\{\pp_0\}}$. If $K$ is a number field, let $A=\OO_K$. Suppose 
$\psi_{L/K}(\pp)\in C_{(i,j)}$ and let $\qq$
be an ideal of $K$ in such that $\qq\cap A$ is in the ideal class of
$(\pp\cap A)^{-1}$ in $\Cl(A)$ and such that 
$\psi_{L/K}(\qq)=(1,1)$; such an ideal exists by Proposition \ref{fixab}.
 Then 
\[
(\pp\cap A)(\qq\cap A)=pA
\]
for some $p\in K^{\times}$.  It follows that $(p)\in I(\mm)$,
$\psi_{L/K}((p))=\psi_{L/K}(\pp)\in C_{(i,j)}$, and
$\PP(p)\subseteq \PP^{(1,1)}\cup \PP^{(i,j)}$, so we conclude
$\PP^{(i,j)}(p)=\{\pp\}$. Additionally, $v_{\pp}(p)=1$, so $J_p^{(i,j)} = \pp\OO_{\pp}$. 
\end{proof}

\subsection{Integrality at $\pp$ with $\psi_{L/K}(\pp)=(1,1)$}

\begin{lem}\label{findq}
Suppose that $\pp_0,\qq_0$ are distinct primes of $K$ coprime to
$\mm$. Set $A:=\OO_K$ if $K$ is a number field, and
$A:=\OO_{\{\qq_0\}}$ if $K$ is a global function field. For any  $\zeta\in
\mu_{\ell}\subseteq K$, and $\sigma\in \Gal(L/K)$, there are infinitely many 
$q\in K^{\times}$ such that
\begin{enumerate}[(i)]
\item $(q)\in I(\mm)$ and $\psi_{L/K}(\qq)=\sigma$;
\item $qA$ is a prime ideal of $A$, so if $K$ is a global function
  field, there is a prime $\qq$ of $K$ such that $qA = \qq\cap A$;
\item $\left(\frac{q}{\pp_0}\right)_{\ell}=\zeta$. 
\end{enumerate}
\end{lem}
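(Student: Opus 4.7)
The plan is to realize the three conditions simultaneously as the Frobenius data of a single prime $\qq = qA$ of $A$ in an augmented abelian extension of $K$, and then invoke Theorem \ref{folkthms} (or classical ray-class Chebotarev in the number-field case) to produce infinitely many such primes.

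Using Proposition \ref{fixab}(iv), I first select a prime $\qq_1$ of $K$ coprime to $\mm\pp\pp_0\qq_0$, unramified in $L$, with $\qq_1\cap A$ in the inverse class of $\pp\cap A$ in $\Cl(A)$. Then $(\pp\cap A)(\qq_1\cap A)$ is principal in $A$; let $\alpha\in K^{*}$ be a generator. After multiplying $\alpha$ by an $\ell$-th power (chosen so as not to enlarge the support of $(\alpha)$), I arrange that $\alpha,a,b$ are $\mathbb{F}_\ell$-linearly independent in $K^{*}/K^{*\ell}$ and, in the number-field case, $\alpha\in 1+\ell^{3}\OO_K$. The field $M:=L(\sqrt[\ell]{\alpha})=K(\sqrt[\ell]{a},\sqrt[\ell]{b},\sqrt[\ell]{\alpha})$ is then abelian over $K$ with $\Gal(M/K)\cong\Gal(L/K)\times\mu_{\ell}$.

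Next, I pick an admissible modulus $\mm''$ for $M/K$ containing $\mm\qq_1\qq_0$ and, in the number-field case, sufficiently high powers of primes over $\ell$ and all archimedean places. The linear-disjointness argument of Proposition \ref{fixab}(iv) adapts to $M$: every proper subextension of $M/K$ is a Kummer extension ramified at a prime in $\{\pp_0,\pp_1,\pp,\qq_1\}$, while $K^A$ is unramified, so $M\cap K^A=K$. The natural map $\Cl_{\mm''}(A)\to\Gal(M/K)\times\Cl(A)$ is therefore surjective, and I select a class $c$ lying over $(\sigma,\zeta,0)$ (with $\mu_{\ell}$ identified with the Kummer component of $\Gal(M/K)$), refining its representative in $(\OO_K/\mm'')^{*}/\bar{A^{*}}$ to enforce the congruences $q\equiv 1\pmod{\qq_1}$, $q\equiv 1\pmod{\qq_0}$, and the appropriate local $\ell$-th-power conditions at primes dividing $\mm$. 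By Theorem \ref{folkthms} there are infinitely many prime ideals $\qq$ of $A$ in $c$, each furnishing a generator $q$ with the required congruences.

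Conditions (i) and (ii) hold by construction, since the $M$-Frobenius of $\qq$ projects to $\sigma$ in $\Gal(L/K)$. For (iii), I apply Hilbert reciprocity $\prod_{\pp'}(q,\alpha)_{K_{\pp'},\ell}=1$. Since $(q)$ is supported on $\{\qq,\qq_0\}$ and $(\alpha)$ on $\{\pp,\qq_1,\qq_0\}$, the only possibly nontrivial factors sit at $\pp$, $\qq$, $\qq_1$, $\qq_0$, and primes of $\mm$; the congruences built into $\mm''$, together with $\alpha,q\in 1+\ell^{3}\OO_K$ (handling primes over $\ell$) and the triviality of Hilbert symbols at archimedean places for odd $\ell$, annihilate the factors at $\qq_1$, $\qq_0$, and $\mm$-primes. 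Formula \eqref{hilbertformula} then evaluates $(q,\alpha)_{\qq}=\left(\frac{\alpha}{\qq}\right)_{\ell}^{-1}=\zeta^{-1}$ and $(q,\alpha)_{\pp}=\left(\frac{q}{\pp}\right)_{\ell}$, so the product formula forces $\left(\frac{q}{\pp}\right)_{\ell}=\zeta$. The hard part of the argument is the combined surjectivity of $\Cl_{\mm''}(A)\to\Gal(M/K)\times\Cl(A)$ with the additional congruence component and the realization of all data in a single class $c$; this rests on the linear-disjointness mechanism of Proposition \ref{fixab}(iv) applied to the enlarged Kummer extension $M$, plus careful bookkeeping to ensure the representative of $c$ in $(\OO_K/\mm'')^{*}/\bar{A^{*}}$ accommodates all residue and congruence requirements.
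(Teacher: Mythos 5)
Your approach is genuinely different from the paper's, so let me compare them and then point out one real gap.

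The paper controls $\left(\frac{q}{\pp_0}\right)_{\ell}$ by enlarging the modulus to $\mm\pp_0$ and invoking the Chinese Remainder Theorem decomposition $K_{\mm\pp_0}/K_{\mm\pp_0,1}\simeq A/(\pp_0\cap A)\times K_{\mm}/K_{\mm,1}$: the $\mm$-component prescribes the $L/K$-Frobenius, the $\pp_0$-component prescribes the residue class and hence the power residue symbol directly, and Theorem \ref{folkthms} supplies infinitely many primes (indeed principal primes, giving (ii)) in the chosen ray class. You instead enlarge the field, forming $M:=L(\sqrt[\ell]{\alpha})$ with $\alpha$ a generator of $(\pp_0\cap A)(\qq_1\cap A)$, prescribe the Frobenius of $\qq$ in $\Gal(M/K)$, and then transmit that Frobenius to $\left(\frac{q}{\pp_0}\right)_{\ell}$ via Hilbert reciprocity on $(q,\alpha)$. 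Both routes rest on the same Chebotarev-in-ray-class-group input, but yours imposes extra bookkeeping: you must maintain linear disjointness of $K(\sqrt[\ell]{\alpha})$ from $L$, control the ideal class of $\alpha$, and show all Hilbert factors except those at $\pp_0$ and $\qq$ vanish. The paper's version is the more economical one; your version does have the minor virtue of isolating exactly which reciprocity constraint forces condition (iii).

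The concrete gap is in the sentence ``After multiplying $\alpha$ by an $\ell$-th power $\ldots$ I arrange that $\alpha,a,b$ are $\mathbb{F}_\ell$-linearly independent in $K^{*}/K^{*\ell}$ and, in the number-field case, $\alpha\in 1+\ell^{3}\OO_K$.'' Multiplying by an $\ell$-th power does not change the image of $\alpha$ in $K^{*}/K^{*\ell}$ at all, so it cannot be used to produce $\mathbb{F}_{\ell}$-independence; and for the second claim one would need $\alpha^{-1}$ to be an $\ell$-th power in $(\OO_K/\ell^3\OO_K)^{*}$, which need not hold. The independence is in fact automatic: $\alpha$ is supported on $\{\pp_0,\qq_1\}$ (and possibly $\qq_0$), all coprime to $\mm$, whereas any nontrivial $a^{i}b^{j}$ has valuation in $\{i,j\}\not\equiv 0\pmod\ell$ at a prime dividing $\mm$, so $\alpha\notin\langle a,b\rangle K^{*\ell}$. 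The congruence $\alpha\in 1+\ell^{3}\OO_K$ is not needed either: to kill $(q,\alpha)_{K_{\pp'},\ell}$ at $\pp'\mid(\ell)$ it suffices to build $q\equiv 1\pmod{\ell^{2e_{\pp'}+1}}$ into your admissible modulus $\mm''$, so that $q$ is a local $\ell$-th power there (as in Lemma \ref{FiniteNonPowers}); this is a congruence on $q$, which your Chebotarev step already controls, rather than on $\alpha$, which it does not. With those two repairs --- dropping the false ``multiply by $\ell$-th powers'' step, and placing the $\ell$-adic congruence on $q$ rather than on $\alpha$ --- your argument goes through, though it remains substantially longer than the paper's direct residue-class construction.
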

\begin{proof}
The proof is similar to that of Lemma~\cite{Park} if
$K$ is a number field or Lemma~3.15 of~\cite{EM17} if $K$ is a global
function field. We outline the proof here.

We have an isomorphism 
\[
K_{\mm}/K_{\mm,1} \to \left(A/\prod_{\pp|\mm}(\pp\cap A)^{v_{\pp}(\mm)}\right)^{\times}
\]
defined by writing $x=y/z$ for some $y,z\in A\cap K_{\mm}$ and then
mapping 
\[
y/z \mapsto yz^{-1}\mod \prod_{\pp|\mm}(\pp\cap A)^{v_{\pp}(\mm)}.
\]
 We then have
\[
K_{\mm\pp_0}/K_{\mm\pp_0,1}\simeq A/(\pp_0\cap A)\times K_{\mm}/K_{\mm,1}
\]
by the Chinese remainder theorem. We also note that
$K_{\mm}/K_{\mm,1}$ is isomorphic to the group of principal 
classes in $C_{\mm}$ via the mapping $p\mapsto (p)$. By 
Proposition \ref{fixab}, there is a prime $\qq'$ of $K$
such that $\qq'\cap A$ is in the principal ideal class and
$\psi_{L/K}(\qq')=\sigma$. Then there exists $q'\in K^{\times}$ such that
$q'A=\qq'\cap A$ and $\psi_{L/K}((q'))=\sigma$. Let $s\in K^{\times}$
satisfy $\left(\frac{a}{\pp_0}\right)_{\ell}=\zeta$. By the above
isomorphism, there is some $x\in K_{\mm}$ which maps to
$(q',s)$. Then there are infinitely many primes $\qq$ of $K$ such
$\qq\cap A$ is in the ideal class of $xA$ and thus are principle, say
of the form $qA$. Any such $q$
 satisfies the three properties of the
lemma.

\end{proof}

We will need the well-known fact that
$K_{\pp}^{\times}/K_{\pp}^{\times\ell}$ is finite for any prime $\pp$ of $K$ and
any prime number $\ell$. We sketch the proof below.
\begin{lem}\label{FiniteNonPowers}
Let $\ell$ be a prime number, let $K$ be a global field satisfying
$\ch(K)\neq \ell$, and let $\pp$ be a prime of $K$. Then
$K_{\pp}^{\times}/K_{\pp}^{\times\ell}$ is finite.
\end{lem}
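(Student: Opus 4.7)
The plan is to split on whether $\pp$ is archimedean, and in the non-archimedean case exploit the standard decomposition $K_{\pp}^* \cong \ZZ \times \OO_{K_{\pp}}^*$ together with the further decomposition $\OO_{K_{\pp}}^* \cong \mu_{q-1} \times U^{(1)}$, where $q := |\FF_{\pp}|$ and $U^{(1)} := 1 + \pp\OO_{K_{\pp}}$ is the group of principal units. The archimedean case is immediate: $K_{\pp}$ is $\RR$ or $\mathbb{C}$, and $K_{\pp}^*/K_{\pp}^{*\ell}$ has order at most $2$.

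So assume $\pp$ is non-archimedean. Choosing a uniformizer yields $K_{\pp}^*/K_{\pp}^{*\ell} \cong \ZZ/\ell\ZZ \times \OO_{K_{\pp}}^*/\OO_{K_{\pp}}^{*\ell}$, reducing the problem to finiteness of the second factor. The Teichmüller decomposition further breaks this into $\mu_{q-1}/\mu_{q-1}^\ell$, which is finite because $\mu_{q-1}$ is finite cyclic, and $U^{(1)}/(U^{(1)})^\ell$. If $\ell$ is coprime to the residue characteristic $p$ of $\pp$, then $\ell$ is a unit in $\OO_{K_{\pp}}$, and Hensel's lemma applied to $x^\ell - u$ for any $u \in U^{(1)}$ shows that the $\ell$-th power map is surjective on $U^{(1)}$; hence $U^{(1)}/(U^{(1)})^\ell$ is trivial.

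The only remaining case is $\ell = p$. The hypothesis $\ch(K) \neq \ell$ forces $K$ to be a number field (since a global function field of characteristic $p$ has residue characteristic $p$ at every prime), so $K_{\pp}$ is a finite extension of $\QQ_\ell$. Then the $\ell$-adic logarithm, which converges on some finite-index open subgroup of $U^{(1)}$, identifies that subgroup with a finite-index subgroup of the additive group $\OO_{K_{\pp}} \cong \ZZ_\ell^{[K_{\pp}:\QQ_\ell]}$; in particular $U^{(1)}$ is a finitely generated $\ZZ_\ell$-module, so the quotient $U^{(1)}/(U^{(1)})^\ell$ is finite. This last step, invoking the $\ZZ_\ell$-module structure of the principal units, is the only delicate point of the argument; the other parts are either trivial or dispatched by a single application of Hensel's lemma.
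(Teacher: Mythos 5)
Your argument is correct, but it takes a genuinely different route from the paper's. The paper gives a single, unified argument: after reducing via the uniformizer to showing $\OO_{\pp}^{*\ell}$ has finite index in $\OO_{\pp}^{*}$, it sets $e := v_{\pp}(\ell)$ (the absolute ramification index, which is $0$ in the tame case) and applies Hensel's lemma once to $x^{\ell}-\alpha$ for $\alpha\in 1+\pp^{2e+1}\OO_{\pp}$ to conclude $1+\pp^{2e+1}\OO_{\pp}\subseteq \OO_{\pp}^{*\ell}$; finiteness of the index then follows immediately from the fact that any open subgroup of a compact (profinite) group has finite index. Your proof instead decomposes $\OO_{K_{\pp}}^*$ as $\mu_{q-1}\times U^{(1)}$, handles the tame case by surjectivity of the $\ell$-th power map on $U^{(1)}$, and in the wild case $\ell=p$ invokes the $\ell$-adic logarithm to identify a finite-index subgroup of $U^{(1)}$ with a f.g.\ $\ZZ_{\ell}$-module; you also explicitly dispose of the archimedean case, which the paper leaves implicit. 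Both arguments are sound. What the paper's approach buys is economy: one Hensel estimate with the ramification-sensitive exponent $2e+1$ handles tame and wild primes together, and compactness does the counting. What your approach buys is structural information: the explicit decomposition makes the order of $K_{\pp}^*/K_{\pp}^{*\ell}$ transparent (e.g.\ $\ell\cdot|\mu_{\ell}(K_\pp)|$ in the tame case), at the cost of a case split and the heavier $\ell$-adic logarithm machinery where the paper's single Hensel step and profiniteness suffice.
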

\begin{proof}
Let $\pi\in K_{\pp}$ be a uniformizer for
 $R_{\pp}$. Then $K_{\pp}^{\times\ell}=\pi^{\ell}\cdot R_{\pp}^{\times\ell}$,
 so it suffices to show that $R_{\pp}^{\times\ell}$ has finite
 index in $R_{\pp}^{\times}$. Let $e$ be the absolute 
ramification index, meaning $\pi^eR_{\pp}=\ell R_{\pp}$.
 By Hensel's Lemma, 
if $\alpha\in 1+\pp^{2e+1}R_{\pp}$, then $\alpha$ is an 
$\ell$th power: let $f(x)=x^{\ell}-\alpha$, then
\[
  |f(1)|_{\pp}\leq
  \frac{1}{{\ell}^{2e+1}}<\frac{1}{{\ell}^{2e}}=\left|\ell\right|^2_{\pp}
  =|f'(1)|^2_{\pp}.
\]
Thus $1+\pp^{2e+1} R_{\pp}\subseteq R_{\pp}^{\times\ell}.$ Also,
$1+\pp^{2e+1}R_{\pp}$ is an open neighborhood of $1$ in the
profinite group $R_{\pp}^{\times}$ and hence has finite index. Thus it
also has finite index in $R_{\pp}^{\times\ell}$.
\end{proof}

We will next show that for a fixed prime $\pp$ of $K$,
$(x,y)_{K_{\pp},\ell}\neq 1$ 
cuts out a diophantine subset of $K^{\times}\times K^{\times}$. 
\begin{lem}\label{lem:hilbertdio}
Assume that $\ell$ is an odd prime, that $K$ is a global field with
$\ch(K)\neq \ell$, and $\pp$ is a prime of $K$. Then 
\[
\{(x,y)\in K^{\times}\times K^{\times}: (x,y)_{K_{\pp},\ell}\not=1\}
\]
is diophantine over $K$.
\end{lem}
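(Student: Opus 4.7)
The plan is to exploit the finiteness of $K_{\pp}^*/K_{\pp}^{*\ell}$ from Lemma~\ref{FiniteNonPowers}, combined with bilinearity of the Hilbert symbol, to reduce to a finite union of coset conditions modulo $K_{\pp}^{*\ell}$, and then to show that each such coset is diophantine over $K$.

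If $\pp$ is archimedean (which requires $K$ a number field), then $K_{\pp}^*/K_{\pp}^{*\ell}$ is trivial because $\ell$ is odd, so the set in the lemma is empty and trivially diophantine. Henceforth assume $\pp$ is finite. Let $G := K_{\pp}^*/K_{\pp}^{*\ell}$, a finite group by Lemma~\ref{FiniteNonPowers}, and set $N := \{(\bar u, \bar v) \in G \times G : (\bar u, \bar v)_{K_{\pp}, \ell} \neq 1\}$. By density of $K$ in $K_{\pp}$ one can pick a representative in $K^*$ for every class in $G$. Writing $C := K^* \cap K_{\pp}^{*\ell}$, one then has
\[
\{(x, y) \in K^* \times K^* : (x, y)_{K_{\pp}, \ell} \neq 1\} = \bigcup_{(\bar u, \bar v) \in N} (u \cdot C) \times (v \cdot C),
\]
for chosen representatives $u, v \in K^*$. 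Since finite unions and $K^*$-translates preserve diophantineness, it suffices to show that $C$ is diophantine over $K$.

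For this, let $e$ be the absolute ramification index of $\pp$ over $\ell$ (with $e = 0$ if $\ch(\FF_{\pp}) \neq \ell$), and fix a uniformizer $\pi \in K$ at $\pp$. I claim
\[
C = \bigl\{w \in K^* : \exists\, s, t \in K^* \text{ with } w = s^\ell t \text{ and } t \in 1 + \pi^{2e+1}\OO_{(\pp)}\bigr\}.
\]
The inclusion $\supseteq$ follows from Hensel's lemma applied in $K_\pp$, which, as in the proof of Lemma~\ref{FiniteNonPowers}, gives $1 + \pp^{2e+1}\OO_\pp \subseteq K_{\pp}^{*\ell}$. For $\subseteq$, given $w = z^\ell$ with $z \in K_{\pp}^*$, I would approximate $z$ by $s \in K^*$ with $v_{\pp}(z - s)$ large; factoring $z^\ell - s^\ell = (z-s)\sum_{i=0}^{\ell - 1} z^{\ell - 1 - i} s^i$ and applying the ultrametric inequality shows $t := w/s^\ell$ satisfies $v_{\pp}(t - 1) \geq v_{\pp}(z - s) - v_{\pp}(z)$, which can be made $\geq 2e + 1$ by taking $s$ close enough to $z$. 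Since local rings of $K$ are diophantine and $\pi \in K$ is fixed, the set $1 + \pi^{2e+1}\OO_{(\pp)}$ is diophantine, hence so is $C$.

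The principal (and minor) obstacle is the valuation bookkeeping in the approximation step needed to guarantee $v_{\pp}(t-1) \geq 2e+1$; this is routine but requires some care to ensure $v_{\pp}(s) = v_{\pp}(z)$ in the estimate.
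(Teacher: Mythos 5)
Your proof is correct and follows essentially the same approach as the paper: use the finiteness of $K_{\pp}^*/K_{\pp}^{*\ell}$ to reduce to finitely many cosets, and show each coset (intersected with $K^*$) is diophantine as a translate of $K^{*\ell}\cdot(1+\pp^{2e+1}\OO_{(\pp)})$. You in fact supply the density/approximation argument showing $K^*\cap K_{\pp}^{*\ell}\subseteq K^{*\ell}\cdot(1+\pi^{2e+1}\OO_{(\pp)})$, a detail the paper leaves implicit in its assertion that $K^*=\bigcup_j S_j$.
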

\begin{proof}
There exist $s_1,\ldots,s_m\in K^{\times}$ which are a complete set of
representatives for $K_{\pp}^{\times}/K_{\pp}^{\times\ell}$
by Lemma~\ref{FiniteNonPowers}. If $\pi\in K_{\pp}$
is a uniformizer, let $e$ be the absolute ramification index, meaning
$\pi^eR_{\pp} = \ell R_{\pp}$. Now
 define $S_j:=s_j\cdot K^{\times\ell} \cdot (1+\pp^{2e+1}\OO_{\pp})$. We have that 
\[
K^{\times}=\bigcup_{j} S_j. 
\]
Given $x,y\in K^{\times}$, there exist $z_i,z_j\in K_{\pp}^{\times\ell}$ such
that $x=z_is_i$ and $y=z_js_j$. Then we compute 
$(x,y)_{K_{\pp},\ell}=(s_i,s_j)_{K_{\pp},\ell}$ by the linearity and
non-degeneracy of the Hilbert symbol. Thus
\[
\{(x,y): (x,y)_\pp\not=1\} = \bigcup_{i_j,i_k: (s_{i_j},s_{i_k})_{K_{\pp},\ell}\not=1} S_i\times S_j,
\]
and this set is diophantine over $K$. 
\end{proof}

We need the following sets for our diophantine definitions of semi-local rings in $K$ whose primes split completely in $L$. 
\begin{defn} $\left.\right.$
\begin{align*}
\widetilde{\Phi_{(i,j)}}&:=K^{\times\ell} \cdot \Phi_{(i,j)}. \\ 
\Psi&:=\left\{(p,q)\in \widetilde{\Phi_{(1,1)}}\times
\widetilde{\Phi_{(-1,-1)}}| \prod_{\pp | \mm} (ap,q)_{K_{\pp},\ell}\not=1 
\text{ and } p \in a^{\ell-1}\cdot K^{\times\ell}(1+J_p^{(-1,-1)})\right\}.
\end{align*}
\end{defn}

\begin{lem}\label{cyclic1,1} $\left.\right.$
\begin{enumerate}[(a)]
\item $\widetilde{\Phi_{(i,j)}}$ and $\Psi$ are diophantine over $K$. 
\item If $(p,q)\in \Psi$, then $\Delta_{ap,q}\cap \Delta_{bp,q}$ is
  nonempty. Moreover, the Jacobson radical $J(R_{p,q}^{(1,1)})$ contains $J_{p,q}^{(1,1)}$, which is diophantine over $K$. 
\item Given $\pp_0$ with $\psi_{L/K}(\pp_0)=(1,1)$, there exists
  $(p,q)\in \Psi$ such that $\Delta_{ap,q}\cap
  \Delta_{bp,q}=\{\pp_0\}$ and $v_{\pp_0}(p)\equiv 1 \pmod
  \ell$. Moreover, $J(R_{p,q}^{(1,1)})=J_{ap,q}+J_{bp,q}$ and thus is diophantine over $K$. 
\end{enumerate}
\end{lem}

\begin{proof}
We have that $\widetilde{\Phi_{(i,j)}}$ is diophantine over $K$ by
Lemma \ref{cyclicnot1,1} part (a), and together with Lemma \ref{cyclicnot1,1} part
(b) and Lemma \ref{lem:hilbertdio}, we conclude $\Psi$ is diophantine
over $K$ as well.

We now prove (b). Let $(p,q)\in \Psi$. Then there is some prime
$\pp\nmid \mm$ such that $(ap,q)\not=1$ by Hilbert Reciprocity. Then
either $v_{\pp}(ap)$ or $v_{\pp}(q)$ is not divisible by
$\ell$. Because $\pp\nmid \mm$, we have $\pp \in \PP(p)\cup \PP(q)$
and consequently $\pp\in \PP^{(1,1)}\cup \PP^{(-1,-1)}$. We claim
that $\pp\in \PP^{(1,1)}$, so we assume toward a contradiction that
$\pp\in \PP^{(-1,-1)}$. Then $\pp\in \PP^{(-1,-1)}(q)$, and $p \in
a^{\ell-1}\cdot K^{\times\ell}\cdot(1+\pp\OO_{\pp})$. Thus $ap\in
K_{\pp}^{\times\ell}$ by Hensel's Lemma, and $(ap,q)_{K_{\pp},\ell}=1$,
a contradiction. Thus $\pp\in \PP^{(1,1)}$. 

Now, we observe that $(a,q)_{K_{\pp},\ell}=(b,q)_{K_{\pp},\ell}=1$ and
hence $(p,q)_{K_{\pp},\ell}\not=1$. Thus $(bp,q)_{K_{\pp},\ell}\not=1$
as well. We conclude that $\pp\in\Delta_{ap,q}\cap\Delta_{bp,q}$. 

In this
case, $J_{ap,q}+J_{bp,q}$ is diophantine, and is contained in (but not
necessarily equal to) the Jacobson
radical of $R_{p,q}^{(1,1)}$. We have
\[
J_{ap,q}+J_{bp,q} = \bigcap_{\pp\in \Delta_{ap,q}+\Delta_{bp,q}} \pp^{r_{\pp}}\OO_{\pp},
\]
where $r_{\pp} = \max\{v_{\pp}(p) - \ell\lfloor\frac{v_{\pp}(p)}{\ell}\rfloor, v_{\pp}(q) - \ell\lfloor\frac{v_{\pp}(q)}{\ell}\rfloor\}.$

We move on to part (c). Suppose $\pp_0$ is a prime of $K$ with
$\pp\not|\mm$ and $\psi_{L/K}(\pp_0)=(1,1)$. We  will now construct 
$(p,q)\in \Psi$ 
such that $\Delta_{ap,q}\cap\Delta_{bp,q}=\{\pp_0\}$. We begin by 
choosing our candidate for $q$. If $K$ is a number field, set
$A:=\OO_K$, and if $K$ is a global function field, let $\pp_1\neq
\pp_0$ be a prime of $K$
such that $\psi_{L/K}(\pp_1)=(1,1)$ and set $A:=\OO_{\{\pp_1\}}$. Let $\zeta\in \mu_{\ell}$ be a primitive $\ell$th root of unity.
Then by Lemma \ref{findq}, there exists infinitely many  
$q\in K^{\times}$ such that $\psi_{L/K}((q))=(\zeta,\zeta)$, 
$\left(\frac{q}{\pp_0}\right)_{\ell}=\zeta$, and $qA$ is a prime ideal of
$A$, so $qA=\qq\cap A$ for some finite prime $\qq$ of $K$. We
 then have that $\{\qq\}=\Delta_{a,q}\cap\Delta_{b,q}$ by Lemma
 \ref{DeltaComputation}. 
We note that this choice of $q$ implies $q\in \widetilde{\Phi_{(-1,-1)}}$. 

For each $\pp|\mm$, by Lemma \ref{FiniteNonPowers}, there is a finite
generating set $E_{\pp}\subseteq K$ for $K_{\pp}^{\times}/K_{\pp}^{\times\ell}$.
Using the Chinese Remainder Theorem, we can assume that for each
$e\in E_{\pp}$, $e\equiv 1 \pmod{\pp_0}$. By Hensel's Lemma,
$e\in K_{\pp_0}^{\times\ell}$ for each $e$.  Finally, fix $e_0\in K^{\times}$ such that
$\left(\frac{e_0}{\pp_0}\right)_{\ell}=1$ and
$\left(\frac{e_0}{\qq}\right)_{\ell}=\zeta$.

We now construct $p$. Using Theorem
\ref{prescribesymbols}, there exists
$p\in K^{\times}$ with the following prescribed Hilbert symbols:
\begin{center}
\begin{tabular}{|c|c|c|c|}
\hline
 & $\pp_0$ & $\qq$ & \text{all other primes} \\ 
 \hline
 $e\in E_{\pp}$, $\pp|\mm$ & $1$ & $1$ & $1$ \\
\hline
$e_0$ & $1$& $1$& $1$ \\
\hline
$q$ & $\zeta$ & $\zeta^{\ell-1}$ & 1 \\ 
\hline
$a$ & $1$& $1$& $1$ \\ 
\hline
$b$ & $1$& $1$& $1$ \\
\hline 
\end{tabular}
\end{center}

Clearly, the Hilbert symbol is $1$ for almost all $\pp$. Also, 
across any row, the product of the symbols is $1$. We now must
 show that for each $\pp$, there exists an element of $K^{\times}$ 
satisfying the prescription. First we will do the $\qq$ column: we
claim that $a^{\ell-1}$ 
satisfies all the prescriptions. For each 
$x\in \{a,b,e_0\}\cup E_{\pp}$, $x$ and $a$ are $\qq$-adic units 
and hence $(x,a)_{K_{\qq},\ell}=1$. Since $\psi_{L/K}((q))=(\zeta,\zeta)$
and $v_{\qq}(q)=1$, we have 
that $(a^{\ell-1},q)_{K_{\qq},\ell}=\zeta^{\ell-1}$. For the $\pp_0$
column, we take $x\in \pp_0\setminus
\pp_0^{2}$, so that in particular, $v_{\pp}(x)=1$. We then have that 
$(x,q)_{K_{\pp_0},\ell}=\left(\frac{q}{\pp_0}\right)_{\ell}=\zeta$. For $\pp|\mm$, 
as mentioned above, $E_{\pp}\subseteq K_{\pp_0}^{\times \ell}$, so 
$(x,e)_{K_{\pp_0},\ell}=1$. For $e_0$, we compute 
$(x,e_0)_{K_{\pp},\ell}=\left(\frac{e_0}{\pp_0}\right)_{\ell}=1$. Finally, 
$(x,a)_{K_{\pp_0},\ell}=(x,b)_{K_{\pp_0}\ell}=1$ because 
\[
  (1,1)=\psi_{L/K}(\pp_0)=\left(\left(\frac{a}{\pp_0}\right)_{\ell},\left(\frac{b}{\pp_0}\right)_{\ell}\right).
\]
By Theorem \ref{prescribesymbols}, there exists $p\in K^{\times}$ satisfying
the prescribed Hilbert symbols above.

We now claim that $p$ has the following properties:
\begin{enumerate}
\item For each $\pp|\mm$ and $e\in E_{\pp}$, $(e,p)_{K_{\pp},\ell}=1$. 
\item $(e_0,p)_{K_{\pp_0},\ell}=1$ and $(q,p)_{K_{\pp_0},\ell}=\zeta$.
\item For $\pp\nmid\mm$, $(a,p)_{K_{\pp},\ell}=(b,p)_{K_{\pp},\ell}=1$. 
\item $(q,p)_{K_{\pp_0},\ell}=\zeta^{\ell-1}$. 
\item $\prod_{\pp|\mm} (ap,q)_{K_{\pp},\ell} = \zeta$. 
\end{enumerate}

Conditions (1) through (4) follow immediately from the above table.
 For condition (5), we compute
\begin{align*}
\prod_{\pp|\mm} (ap,q)_{K_{\pp},\ell} 
&=\prod_{\pp|\mm} (a,q)_{K_{\pp},\ell}(p,q)_{K_{\pp,\ell}} \\
&=\prod_{\pp|\mm} (a,q)_{K_{\pp},\ell}\\
&= \left(\prod_{\pp\nmid\mm} (a,q)_{K_{\pp},\ell}\right)^{-1} \\
&= (a,q)_{K_{\qq},\ell}^{-1} \\
&= \zeta^{\ell-1},
\end{align*}
where the second equality follows from $(p,q)_{K_{\pp}}=1$ 
for $\pp|\mm$ by the construction of $p$, 
the third from Hilbert Reciprocity, the fourth is a computation using
the fact that $\PP(q)=\{\qq,\pp_1\}$ and Equation
\ref{hilbertformula}, and the fifth from $\psi_{L/K}((q))=(\zeta,\zeta)$.  

We claim that $(p,q)\in \Psi$. First, we will show that
$v_{\pp}(p)\equiv 0 \pmod \ell$ for each $\pp|\mm$. For all
$e\in E_{\pp}$, a generating set for $K_{\pp}^{\times}/K_{\pp}^{\times\ell}$, we
have $(e,p)_{K_{\pp},\ell}=1$ by (1). Then by the non-degeneracy of
the Hilbert symbol as a pairing on $K_{\pp}^{\times}/K_{\pp}^{\times\ell}$, we
have that $p\in K_{\pp}^{\times\ell}$ and hence
$v_{\pp}(p)\equiv 0 \pmod \ell$ for each $\pp|\mm$. By weak
approximation, there exists $r\in K^{\times}$ such that
$v_{\pp}(r^{\ell}p)=0$ for each $\pp|\mm$, so we may assume
$(p)\in I_{\mm}$. From $(a,p)_{K_{\pp},\ell}=(b,p)_{K_{\pp},\ell}=1$
for each $\pp\nmid\mm$ and by \ref{hilbertformula}, it follows that
any prime $\pp$ dividing $(p)$ to a power
$e_{\pp}\not\equiv 0 \pmod \ell$ must satisfy
$\psi_{L/K}(\pp)=(1,1)$. Thus $\psi_{L/K}((p))=(1,1)$ as well, so
$p\in \widetilde{\phi_{(1,1)}}$. By construction of $q$, we have
$q\in \phi_{(-1,-1)}$, and by (5),
$\prod_{\pp|\mm} (ap,q)_{K_{\pp},\ell} \not=1$. Now we claim that
$ap\in K_{\qq}^{\times\ell}$. Because $e_0$ and $a$ are $\qq$-adic units,
$(e_0,a)_{K_{\qq},\ell}=1$, and by (2), $(e_0,p)_{K_{\qq},\ell}=1$, so
$(ap,e_0)_{K_{\qq},\ell}=1$. Thus $v_{\qq}(ap)\equiv 0 \pmod \ell$,
again by Equation \ref{hilbertformula}. Similarly, we have that
$(ap,q)_{\qq}=1$. Because $q$ and $e_0$ generate
$K_{\qq}^{\times}/K_{\qq}^{\times\ell}$, and again using the non-degeneracy of
the Hilbert symbol, we conclude that $ap\in K_{\qq}^{\times\ell}$.  We have
that $R_q^{(-1,-1)}=\OO_{\qq}$ and $J(R_q^{(-1,-1)})=\qq\OO_{\qq}$,
so by Hensel's Lemma, we conclude
$K^{\times}\cap K_{\qq}^{\times\ell} = 1+J(R_q^{(-1,-1)})$ and that
$ap \in 1+J(R_q^{(-1,-1)})$. Hence
$p\in a^{\ell-1}K^{\times\ell}(1+J(R_q^{(-1,-1)}))$, as claimed. Thus
$(p,q)\in \Psi$.

Finally, we show that $\Delta_{ap,q}\cap \Delta_{bp,q}=\{\pp_0\}$. 
Because $(a,q)_{K_{\pp_0},\ell}=(b,q)_{K_{\pp_0},\ell}=1$, and
$(p,q)_{K_{\pp_0},\ell}=\zeta$, we have 
\[
(ap,q)_{K_{\pp_0},\ell}=(bp,q)_{K_{\pp_0},\ell}=\zeta\not=1,
\] 
so $\pp_0\in \Delta_{ap,q}\cap\Delta_{bp,q}$. As observed above, 
$(ap,q)_{K_{\qq},\ell}=1$, so $\qq\not\in
\Delta_{ap,q}\cap\Delta_{bp,q}$. 
If $\pp\neq \pp_0,\qq$ is any other prime not dividing $\mm$, we have
$(q,a)_{K_{\pp},\ell}=1$ and $(q,p)_{K_{\pp},\ell}=1$, so
$(q,ap)_{K_{\pp},\ell}=1$. If $K$ is a number field and  $\pp|(\ell)$, then
because $a\in K_{\pp}^{\times\ell}$, we have
$(a,q)_{K_{\pp},\ell}=1$. Also, $(p,q)_{K_{\pp},\ell}=1$ by construction of 
$p$, so $(ap,q)_{K_{\pp},\ell}=1$. If $\pp|\mm$ but does not divide
$(\ell)$, we again have $(p,q)_{K_{\pp},\ell}=1$. On the other hand, 
at most one of $(a,q)_{K_{\pp},\ell}$ and $(b,q)_{K_{\pp},\ell}$ can
not equal $1$, because $(a)$ and $(b)$ are coprime. 
Thus $\pp\not\in \Delta_{ap,q}\cap\Delta_{bp,q}$, so $\{\pp_0\} =
\Delta_{ap,q}\cap \Delta_{bp,q}$.

\end{proof}

\section{Proof of Main Theorem}\label{sec:mainthm}

\begin{lem}\label{notpowermodp}
Assume that $\ell$ is an odd prime, that $K$ is a global field with
$\ch(K)\neq \ell$, and $\zeta\in K$ is a primitive $\ell$th root of unity. Let $\pp$ be a prime of $K$. Also assume $s\in K^{\times}$ satisfies
 $v_{\pp}(s)=0$ and $\left(\frac{s}{\pp}\right)_{\ell}=\zeta\not=1$. Then the set
\[
s\cdot K^{\times\ell} \cdot (1+\pp\OO_{\pp})
\]
are the elements of $K$ such that $v_{\pp}(x)\equiv 0 \pmod \ell$
 and there exists $t\in K^{\times}$ such that $v_{\pp}(xt^{\ell})=0$ and $\left(\frac{xt^{\ell}}{\pp}\right)_{\ell}=\zeta$. 
\end{lem}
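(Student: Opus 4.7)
The plan is to prove the set equality by establishing both inclusions directly, using two facts already spelled out in the paper: first, that for $u \in \OO_{(\pp)}^*$ the power residue symbol $\bigl(\tfrac{u}{\pp}\bigr)_\ell$ is multiplicative and computed via reduction modulo $\pp$; and second, that $\bigl(\tfrac{u}{\pp}\bigr)_\ell = 1$ forces $u \equiv b^\ell \pmod \pp$ for some $b \in \OO_{(\pp)}^*$. The key observation underlying both directions is that every element of $1 + \pp\OO_{(\pp)}$ reduces to $1$ in $\FF_\pp$ and so has power residue symbol equal to $1$.

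For the forward inclusion, suppose $x = s \cdot u^\ell \cdot (1+w)$ with $u \in K^*$ and $w \in \pp\OO_{(\pp)}$. Since $v_\pp(s) = v_\pp(1+w) = 0$, we get $v_\pp(x) = \ell\, v_\pp(u) \equiv 0 \pmod \ell$. Choosing $t := u^{-1}$ gives $xt^\ell = s(1+w) \in \OO_{(\pp)}^*$, and multiplicativity of the symbol yields
\[
\left(\frac{xt^\ell}{\pp}\right)_\ell = \left(\frac{s}{\pp}\right)_\ell \left(\frac{1+w}{\pp}\right)_\ell = \zeta \cdot 1 = \zeta.
\]
For the reverse inclusion, suppose $x$ satisfies $v_\pp(x) \equiv 0 \pmod \ell$ and that there is a $t \in K^*$ with $v_\pp(xt^\ell) = 0$ and $\bigl(\tfrac{xt^\ell}{\pp}\bigr)_\ell = \zeta$. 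Set $y := xt^\ell/s \in \OO_{(\pp)}^*$; then by multiplicativity $\bigl(\tfrac{y}{\pp}\bigr)_\ell = \zeta/\zeta = 1$, so there exists $b \in \OO_{(\pp)}^*$ with $y \equiv b^\ell \pmod \pp$. Writing $y = b^\ell(1 + w)$ with $w := (y - b^\ell)/b^\ell \in \pp\OO_{(\pp)}$ and solving for $x$ gives
\[
x = s \cdot (b/t)^\ell \cdot (1+w) \in s \cdot K^{*\ell} \cdot (1 + \pp\OO_{(\pp)}),
\]
as required.

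There is no serious obstacle here; the statement is essentially an unpacking of the definition of the power residue symbol together with multiplicativity. The only mild subtleties are picking the correct $t$ in each direction (specifically $t = u^{-1}$ in the forward direction, so that $xt^\ell$ lands in $\OO_{(\pp)}^*$ and the symbol is defined), and recognizing that the ``modulo $\pp$'' statement about $\ell$-th powers is exactly what is needed to produce the factor in $1 + \pp\OO_{(\pp)}$ without any appeal to deeper lifting arguments over the completion.
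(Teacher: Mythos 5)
Your proof is correct and follows essentially the same route as the paper's: unwind the definition for the forward inclusion using multiplicativity of the power residue symbol and $\bigl(\tfrac{1+w}{\pp}\bigr)_\ell = 1$, and for the reverse use that symbol $1$ forces an $\ell$th power mod $\pp$, then solve for $x$. The only difference is that you spell out the final algebraic step ($y = b^\ell(1+w)$, $x = s(b/t)^\ell(1+w)$) where the paper simply says ``we conclude,'' which is a harmless clarification.
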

\begin{proof}
Suppose $x\in s\cdot K^{\times \ell} \cdot (1+\pp\OO_{\pp})$. Then there exists
$p\in \pp\OO_{\pp}$ and $t\in K^{\times}$ such that $x=st^{\ell}(1+p)$. Thus
 $v_\pp(x)=\ell v_{\pp}(t)$, so $v_{\pp}(x/t^{\ell})=0$. We compute
\[
  \left(\frac{xt^{-\ell}}{\pp}\right)_{\ell}=\left(\frac{s(1+p)}{\pp}\right)_{\ell}=\left(\frac{s}{\pp}\right)_{\ell}.
\]
Conversely, suppose there exists $t\in K^{\times}$ with $v_{\pp}(xt^{\ell})=0$ and 
$\left(\frac{xt^{\ell}}{\pp}\right)_{\ell}=\zeta$. Then
$\left(\frac{xt^{\ell}s^{-1}}{\pp}\right)_{\ell}=1$, so there exists some $u\in
\OO_{\pp}$ with $xt^{\ell}s^{-1}\equiv u^{\ell} \mod \pp$. We conclude 
$x\in s\cdot K^{\ell}\cdot (1+\pp\OO_{\pp})$. 
\end{proof}

We will now prove Theorem \ref{thm:nonnormprime}, which states that 
if $\ell$ is an odd prime, $K$ is a global field with $\ch(K)\neq
\ell$, and $K$ contains $\mu_{\ell}$, then
\[
\{(x,y)\in K^{\times}\times K^{\times}: x \text{ is not a norm of } K(y^{1/{\ell}})\}
\]
is diophantine over $K$.

\begin{proof}
We have that, given $x,y\in K^{\times}$, $x$ is not a norm in
$K(\sqrt[\ell]{y})/K$ if and only if it fails to be a relative local norm 
by the Hasse norm theorem. This happens if and only if there exists
a prime $\pp$ of $K$ such that $(a,b)_{K_{\pp},\ell}\neq 1$.
 Fix $a,b\in K^{\times}$ and a modulus $\mm$ of $K$ for
 $L:=K(\sqrt[\ell]{a},\sqrt[\ell]{b})$ as in Proposition \ref{fixab}. Define 
$s_{(-1,-1)}:=a=:s_{(-1,1)}$ and $s_{(1,-1)}:=b$. We will show that
there is a prime $\pp$ of $K$ such that $(x,y)_{K_{\pp},\ell}\not=1$ 
if and only if one of the following conditions are satisfied:

\begin{itemize}
\item $\exists\, \pp|\mm \text{ such that } (x,y)_{K_{\pp},\ell}\not=1$,
\item $\bigvee_{(i,j)\not=(1,1)} \exists p \in \Phi_{(i,j)} \text{ such that}$
\begin{align*}
\Biggl(\Biggl(x\in\bigcup_{r=1}^{\ell-1} p^r \cdot K^{\times \ell} 
\cdot (R_{p}^{(i,j)})^{\times}\Biggr)
&\land \Biggl(\bigvee_{\substack{0\leq c\leq \ell-1 \\
                                    1\leq d\leq\ell-1}} x^cy^d \in 
\bigcup_{k=1}^{\ell-1}s_{(i,j)}^k \cdot K^{\times \ell} \cdot (1+J^{(i,j)}_p)\Biggr)\Biggr) \\ 
\lor \Biggl(\Biggl(y\in \bigcup_{r=1}^{\ell-1} p^r  \cdot K^{\times \ell}
  \cdot (R_{p}^{(i,j)})^{\times}\Biggr) 
&\land \Biggl(\bigvee_{\substack{1\leq c\leq\ell-1 \\
  0\leq d\leq\ell-1}} x^cy^d \in 
\bigcup_{k=1}^{\ell-1}s_{(i,j)}^k \cdot K^{\times \ell} \cdot (1+J^{(i,j)}_p)\Biggr)\Biggr),
\end{align*}
\item $\exists(p,q)\in \Psi_K \text{ such that } q\in (R^{(1,1)}_{p,q})^{\times} \text{ and }$ 
\begin{align*}
\Biggl(\Biggl(x\in \bigcup_{r=1}^{\ell-1} p^r \cdot K^{\times \ell} 
\cdot (R_{p,q}^{(1,1)})^{\times}\Biggr) 
&\land \Biggl(\bigvee_{\substack{0\leq c\leq\ell-1 \\
  1\leq d\leq\ell-1}} x^cy^d 
\in \bigcup_{k=1}^{\ell-1} q^k \cdot K^{\times \ell} 
\cdot (1+J_{ap,q}+J_{bp,q}))\Biggr)\Biggr) \\ 
\lor\Biggl(\Biggl(y\in \bigcup_{r=1}^{\ell-1} p^r 
\cdot K^{\times \ell} \cdot (R_{p,q}^{(1,1)})^{\times}) 
&\land \Biggl(\bigvee_{\substack{1\leq c\leq\ell-1 \\ 0\leq d\leq\ell-1}}
  x^cy^d \in \bigcup_{k=1}^{\ell-1}q^k \cdot K^{\times \ell} 
\cdot (1+J_{ap,q}+J_{bp,q})\Biggr)\Biggr).
\end{align*}
\end{itemize}

This will imply the theorem, because
the sets above are all diophantine over $K$ by Lemmas~\ref{cyclicnot1,1} and~\ref{cyclic1,1}, Definiton~\ref{rpdefn}, and Proposition~\ref{prop:Tabdio}. 
 
We now prove the claim. Suppose $x$ is not a norm of $K(\sqrt[\ell]{y})$; then there exists a
prime $\pp$ such that $(x,y)_{K_{\pp},\ell}\not=1$. We will show that one of the above conditions on $x$ and $y$ is satisfied. If $\pp|\mm$, then the first condition holds and we are done, so assume otherwise.

First suppose that $\pp\nmid\mm$ satisfies
$\psi_{L/K}(\pp)\in C_{i,j}$ with $(i,j)\not=(1,1)$. By
Equation~\ref{hilbertformula}, we have
that $v_{\pp}(x)\not\equiv 0 \pmod \ell$ or $v_{\pp}(y)\not\equiv 0\pmod{\ell}$. Assume first that $v_{\pp}(x)\not\equiv 0 \pmod{\ell}$. Then by
Lemma~\ref{cyclicnot1,1} part (b), there exists $p\in \Phi_{(i,j)}$
such that $v_{\pp}(p)=1$ and $\PP^{(i,j)}(p)=\{\pp\}$. Then there
exists $1\leq r\leq \ell-1$ such that
$v_{\pp}(x)\equiv v_{\pp}(p^r) \pmod \ell$. Observe that since
$\psi_{L/K}(\pp)\not=(1,1)$, we must have that
$\left(\frac{s_{(i,j)}}{\pp}\right)_{\ell}\not=1$.
Thus 
\[
x\in p^r\cdot K^{\times \ell} \cdot\OO_{\pp}^{\times}=p^r\cdot K^{\times \ell}\cdot (R_{p}^{(i,j)})^{\times},
\]
where the equality follows from Definition~\ref{rpdefn}. Since 
$(x,y)_{K_{\pp},\ell}\not=1$, we must have that 
$\left(\frac{x^{v_{\pp}(y)}y^{-v_{\pp}(x)}}{\pp}\right)_{\ell}\not=1$, which implies 
\[
x^{v_{\pp}(y)}y^{-v_{\pp}(x)}\in s_{(i,j)}^k\cdot K^{\times \ell} \cdot(1+J(R_p^{(i,j)}))
\]
for some $k=1,\ldots,n-1$ by Lemma \ref{notpowermodp}. Writing 
$v_{\pp}(x)=\ell q_1+r_1$, $v_{\pp}(y)=\ell q_2+r_2$ with $1\leq
r_1\leq \ell-1$, $0\leq r_2\leq \ell-1$, we have that
\[
x^{r_2}y^{\ell-r_1}\in s_{(i,j)}^k\cdot K^{\times \ell} \cdot(1+J(R_p^{(i,j)})).
\]
The argument for when $v_{\pp}(x)\equiv 0 \pmod \ell$ and
$v_{\pp}(y)\not\equiv 0 \mod \ell$ 
is similar. 

Now suppose $(x,y)_{K_{\pp},\ell}\not=1$ for a prime $\pp\in
C_{(1,1)}$. Then using Lemma~\ref{cyclic1,1} part (c), there exists
$(p,q)\in \Psi$ such that $\Delta_{ap,q}\cap \Delta_{bp,q}=\{\pp\}$. 
Moreover, by the proof of Lemma \ref{cyclic1,1} part (c), $q$ can be chosen
so that $v_{\pp}(q)=0$ and $\left(\frac{q}{\pp}\right)_{\ell}\not=1$, and $p$ 
satisfies $v_{\pp}(p)\not\equiv 0\pmod \ell$. Then 
\[
q\in \OO_{\pp}^{\times}=(R_{p,q}^{(1,1)})^{\times}, 
\]
and assuming first that $v_{\pp}(x)\not\equiv 0 \pmod \ell$, it
follows that there exists $1\leq r \leq \ell-1$ such that
\[
 x\in p^r\cdot K^{\times \ell} \cdot (R_{p,q}^{(1,1)})^{\times}.
\]
By arguing similarly as in the case with $\psi_{L/K}(\pp)\not=(1,1)$, we conclude 
\[
x^{c}y^{d}\in q^k\cdot K^{\times \ell} \cdot(1+J(R_{p,q}^{(1,1)})).
\]
The case when $v_{\pp}(y)\not\equiv 0 \pmod \ell$ again is similar.

Now, conversely, suppose one of the three conditions hold. 
There is nothing to show if the first condition holds, so suppose the 
second holds. Without loss of generality, for some
$(i,j)\not=(1,1)$, there exists $p\in \Phi_{(i,j)}$ such that
\begin{equation}\label{valxnonzero}
x\in p^r\cdot K^{\times \ell} \cdot (R_p^{(i,j)})^{\times}\subseteq p^r\cdot
K^{\times \ell} \cdot(\OO_{\pp})^{\times}.
\end{equation}
This implies  
\[
v_{\pp}(x)\equiv v_{\pp}(p^r)\not\equiv 0 \pmod \ell.
\]
By Lemma \ref{cyclicnot1,1} part (b),
$\PP^{(i,j)}(p)\not=\emptyset$ 
and thus contains some prime $\pp$; we will now compute
$(x,y)_{K_{\pp},\ell}$. 
 For some $0\leq c \leq \ell-1$ and $1\leq d,k\leq \ell-1$, we have that 
\begin{equation}\label{xynonpower}
\tag{$\star$} x^cy^d\in s_{(i,j)}^k\cdot K^{\times \ell} \cdot
(1+J(R_p^{(i,j)}))\subseteq s_{(i,j)}^k\cdot K^{\times\ell} \cdot (1+\pp\OO_{\pp}).
\end{equation}
It follows that
$(x,y)_{K_{\pp},\ell}\neq 1$ if and only if
$(x,y)^d_{K_{\pp},\ell}\neq 1$ since $1\leq d \leq \ell-1$; thus we
will show $(x,y)^d_{K_{\pp},\ell}\neq 1$.

We compute
\begin{align*}
(x,y)_{K_{\pp},\ell}^d&=(x,x)^c_{K_{\pp},\ell}(x,y)_{K_{\pp,\ell}}^d \\ 
 & =(x,x^cy^d)_{K_{\pp},\ell}. 
\end{align*}
The first equality follows because $(x,x)_{K_{\pp},\ell}=1$, and the second follows from  linearity of
  the Hilbert symbol. Possibly
 by multiplying $x^cy^d$ by an $\ell$th power of $K^{\times}$, we can
assume $v_{\pp}(x^cy^d)=0$. Because
$v_{\pp}(x^cy^d)=0$, we have that
$(x,x^cy^d)_{K_{\pp},\ell}=\left(\frac{x^cy^d}{\pp}\right)_{\ell}^{v_{\pp}(x)}$ by
Equation \ref{hilbertformula}. 

Because of Equation $\star$ and by Hensel's Lemma, 
there exists $z\in K_{\pp}^{\times\ell}$ such that
$x^cy^dz=s_{(i,j)}^k$. This implies
$(x,x^cy^d)_{K_{\pp},\ell}=(x,s_{(i,j)})_{K_{\pp},\ell}$, and
$(x,s_{(i,j)})_{K_{\pp},\ell}\neq 1$ by Lemma
\ref{notpowermodp}. Combing the above computations, we see that
$(x,y)_{K_{\pp},\ell}\neq 1$.

Now suppose the third condition holds. By Lemma~\ref{cyclic1,1} part (b),
 $\Delta_{ap,q}\cap\Delta_{bp,q}\not=\emptyset$ and so it contains
 some prime $\pp$. We claim that $(x,y)_{K_{\pp},\ell}=1$. Without
 loss of generality and by the same reasoning above,
$x\in p^r\cdot K^{\times \ell} \cdot(\OO_{\pp})^{\times}$, and 
$ x^cy^d\in q^k\cdot K^{\times \ell} \cdot(1+\pp\OO_{\pp})$
for some $c$ satisfying $0\leq c \leq \ell-1$, and $r,d,k$ satisfying $1\leq r,d,k\leq \ell-1$. Because 
$(ap,q)_{K_{\pp},\ell}\not=1$ and $v_{\pp}(q)=0$, we must have that 
$\left(\frac{q}{\pp}\right)_{\ell}\not=1$. The same argument and similar 
computations to the above give us that $(x,y)_{K_{\pp},\ell}\not=1$, 
now with $q$ playing the role of $s_{(i,j)}$. 
\end{proof}

We now prove Corollary~\ref{cor:nonnormsqfree} using Theorem~\ref{thm:nonnormprime}, the Hasse norm theorem and local class field theory.
\begin{proof}[Proof of Corollary \ref{cor:nonnormsqfree}]
Assume $n$ is a square-free natural number, $K$ is a global field with
$(\ch(K),n)=1$, and $K$ contains $\mu_n$, the $n$th roots of
unity. Let $n=\prod_{i=1}^r \ell_i$,
where the
 $\ell_i$ are the distinct $r$ primes dividing $n$. We will show
\[
\{(x,y)\in K^{\times}\times K^{\times}: x \text{ is not a norm of }
K(\sqrt[n]{y})/K\} 
\]
is equal to 
\[
 \bigcup_{i=1}^r \{(x,y)\in K^{\times}\times K^{\times}: x \text{ is not a norm of }
K(\sqrt[\ell_i]{y})/K\}.
\]
After showing this, Corollary \ref{cor:nonnormsqfree} follows from Theorem
\ref{thm:nonnormprime} and Theorem~1.3 of~\cite{EM17} if $2|n$, because
the finite union of diophantine sets is
diophantine. 

Given a cyclic extension $M$ of $K$ of degree $n$ and $\alpha\in K$,
we have that $M=K(\alpha^{1/n})$ is the compositum of the fields
$L_i:=K(\alpha^{1/\ell_i})$. We note that $M/K$, and hence each
$L_i/K$, satisfies the Hasse norm principle. Let $\mathcal{P}$ be a
prime of $M$ and let $\Pp_i$ be the corresponding prime below
$\mathcal{P}$ in $L_i$ for each $i$. Let $\pp$ be the prime of $K$
below $\mathcal{P}$. Then
\[
M_{\mathcal{P}}=(L_1)_{\Pp_i}\cdots (L_r)_{\Pp_r}
\]
Since $M_{\mathcal{P}}/K_{\pp}$ is cyclic and hence abelian, by local class field theory we have
\[
N_{M_{\mathcal{P}}/K_{\pp}}(M_{\mathcal{P}}^{\times}) = \bigcap_{i=1}^r N_{L_{\Pp_i}/K_{\pp}}(L_{\Pp_i}^{\times}).
\]
Each $L_i/K$ is cyclic, so they satisfy the Hasse norm principle; we thus conclude
\begin{align*}
N_{M/K}(M^{\times})&=K^{\times}\cap\bigcap_{\Pp} N_{M_{\mathcal{P}}/K_{\pp}}(M_{\mathcal{P}}^{\times}), \text{ by the Hasse norm principle;}\\
&=K^{\times}\cap\bigcap_{\Pp}\bigcap_{i=1}^r N_{L_{\Pp_i}/K_{\pp}}(L_{\Pp_i}^{\times}), \text{ by local class field theory;} \\ 
&= K^{\times}\cap\bigcap_{i=1}^r \bigcap_{\Pp_i} N_{L_{\Pp_i}/K_{\pp}}(L_{\Pp_i}^{\times}) \\ 
&= K^{\times}\cap\bigcap_{i=1}^r N_{L_i/K}(L_i^{\times}) \text{ again by the Hasse norm principle.}
\end{align*}
Thus
\[
\{(x,y)\in K^{\times}\times K^{\times} : x \text{ is not a norm of } K(y^{1/n})\} 
\]
is equal to
\[
 \bigcup_{i=1}^r \{(x,y)\in K^{\times}\times K^{\times} : x \text{ is not a norm of } K(y^{1/l_i})\}.
\]
This is a finite union of sets diophantine over $K$ by Theorem
\ref{thm:nonnormprime} if $l_i$ is odd and by Theorem~1.3 of
\cite{EM17} if
$l_i$ is $2$.
\end{proof}

\section{Non-$n$th powers are diophantine}

In this section, we prove Corollary \ref{cor:nonnthpowersdio}. This
was proved in \cite{CTG15} when $K$ is a number field. 
\begin{cor}
Let $n>1\in \NN$ and let $K$ be a global field with $(\ch(K),n)=1$. Then
$K^{\times}\setminus K^{\times n}$ is diophantine over $K$. 
\end{cor}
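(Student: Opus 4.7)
The plan is to chain three reductions to reach Theorem \ref{thm:nonnormprime} (combined with Theorem 1.3 of \cite{EM17} for $p = 2$). Writing $n = \prod_{p \mid n} p^{a_p}$, I first reduce to prime powers by observing $K^{*n} = \bigcap_{p\mid n} K^{*p^{a_p}}$: the non-trivial containment follows because if $y \in K^{*p^{a_p}}$ for each $p$, its order in $K^*/K^{*n}$ divides each of the pairwise-coprime integers $n/p^{a_p}$, hence is $1$. Thus $K^* \setminus K^{*n} = \bigcup_{p \mid n}(K^* \setminus K^{*p^{a_p}})$ is a finite union of sets I must show are diophantine, so it suffices to treat a single prime power $p^k$ with $p \neq \ch(K)$.

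Second, I replace $K$ by $K' := K(\mu_p)$; since $d := [K':K]$ divides $p-1$ it is coprime to $p^k$, and taking the norm $N_{K'/K}$ of a putative $p^k$-th root $z \in K'$ of $y \in K^*$ gives $y^d = N_{K'/K}(z)^{p^k}$, whence a Bezout combination with $y^{p^k} \in K^{*p^k}$ yields $y \in K^{*p^k}$. Hence $K^* \setminus K^{*p^k} = K^* \cap (K'^* \setminus K'^{*p^k})$, and by a Weil restriction argument—expressing each $K'$-variable in a $K$-basis of $K'$ and converting the resulting $K'$-polynomial equation into a system over $K$—it suffices to construct a diophantine definition of $K'^* \setminus K'^{*p^k}$ over $K'$, where now $\mu_p \subset K'$.

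Third, I induct on $k \geq 1$ over $K'$. For the base case $k=1$: if $y \in K'^{*p}$ then $K'(\sqrt[p]{y}) = K'$ and every element is trivially a norm, while if $y \notin K'^{*p}$ then $K'(\sqrt[p]{y})/K'$ is cyclic of degree $p$ with a proper norm subgroup (by the Hasse norm theorem and class field theory), so $y \notin K'^{*p}$ iff $\exists x \in K'^*$ failing to be a norm of $K'(\sqrt[p]{y})/K'$; this is diophantine by Theorem \ref{thm:nonnormprime} (for odd $p$) or by \cite[Theorem~1.3]{EM17} (for $p = 2$), after existentially projecting out $x$. For the inductive step, observe that $y \in K'^{*p^k}$ iff \emph{some} $p$-th root of $y$ lies in $K'^{*p^{k-1}}$; since $\mu_p \subset K'$, the $p$-th roots of $y$ (when they exist) form a single $\mu_p$-coset $\{\zeta z : \zeta \in \mu_p\}$, so
\[
y \notin K'^{*p^k} \iff \bigl(y \notin K'^{*p}\bigr) \lor \bigl(\exists z \in K'^* : z^p = y \text{ and } \zeta z \in K'^* \setminus K'^{*p^{k-1}} \text{ for all } \zeta \in \mu_p\bigr),
\]
which is diophantine by the inductive hypothesis together with closure of diophantine sets under finite conjunction, finite disjunction, and existential projection.

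The only subtle point is the implicit universal quantifier in the inductive step, which demands that \emph{every} $p$-th root of $y$ fail to be a $p^{k-1}$-th power; finiteness of $\mu_p \subset K'$ turns this into a finite conjunction, which is precisely why it was essential to first reduce to the case $\mu_p \subset K$ in Step 2.
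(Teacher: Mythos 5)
Your proof is correct, and it takes a genuinely more self-contained route than the paper's. After the paper's reduction to $n$ prime with $\mu_n \subset K$ (which it simply cites to \cite{VAV2012,CTG15}), the paper invokes non-degeneracy of the local Hilbert symbol pairing to conclude that $x \in K^* \setminus K^{*n}$ if and only if there exists $y \in K^*$ with $x$ not a norm of $K(\sqrt[n]{y})/K$, and then projects the two-variable set $D$ of Theorem \ref{thm:nonnormprime} onto its \emph{first} coordinate. You instead re-derive the cited reduction from scratch: prime-power decomposition, ascent to $K' := K(\mu_p)$ with the $\gcd([K':K], p^k)=1$ Bezout trick and a Weil-restriction descent, and an induction on the exponent $k$ driven by the identity $y \in K'^{*p^k} \iff \exists z\,(z^p = y \text{ and some } \zeta z \in K'^{*p^{k-1}})$, made a finite disjunction by $\mu_p \subset K'$. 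Your base case also uses a lighter ingredient: rather than Hilbert-symbol non-degeneracy, you observe that a nontrivial degree-$p$ cyclic extension has norm group of index $p$ in $K'^*$ (global class field theory; the Hasse norm theorem you mention is not actually needed there), and accordingly you project $D$ onto its \emph{second} coordinate. Both routes are valid. One small misstatement in Step 1: the integers $n/p^{a_p}$ are not pairwise coprime once $n$ has three or more prime factors; what your argument actually uses, correctly, is that their greatest common divisor is $1$.
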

\begin{proof}
It suffices to prove this for $n$ prime and $K$ containing a primitive
$n$th root of unity, as observed in \cite{VAV2012,CTG15}, which we now assume. Because the Hilbert
symbol 
\[
(\cdot,\cdot)_{K_{\pp},n}: K_{\pp}^{\times}/K_{\pp}^{\times n}\times
K_{\pp}^{\times}/K_{\pp}^{\times n}\to \mu_n
\]
 is a non-degenerate
pairing, we have that $x\in K^{\times}\setminus K^{\times n}$ if
and only if there exists $y\in K^{\times}$ such that $(x,y)_{K_{\pp},n}\neq
1$. This holds if and only if there exists $y\in K^{\times }$ such that $x$ is not a norm
of $K(\sqrt[n]{y})/K$. Set 
\[
D:=\{(x,y)\in K^{\times}\times K^{\times}: x \text{ is not a norm of }
K(\sqrt[n]{y})/K\}
\]
Putting this together, we see
\[
K^{\times}\setminus K^{\times n} = \{x\in K^{\times}: \exists y\in K^{\times} \text{ s.t. } (x,y)\in D\},
\]
so $K^{\times}\setminus K^{\times n} $ is diophantine over 
$K$ by Theorem \ref{thm:nonnormprime}. 
\end{proof}

\section{Non-norms of cyclic extensions}\label{sec:norootsof1}
We will first prove Theorem \ref{thm:norootsof1} in the case that $n=\ell$ is a
prime, $K$
contains $\mu_{\ell}$, the primitive $\ell$th root of unity, and $\ch(K)\neq \ell$,
and then show how the theorem for $n>1$ square-free and fields $K$ not
containing $\mu_n$ follows. Recall that, for a finite field extension
$L/K$ of degree $n$, a {\em norm form for $L/K$} is a homogeneous polynomial $f$ of
degree $n$ in the $n$ variables $t_1,\ldots,t_n$ (thought of as ranging over $K$)
such that there is a $K$-basis $b_1,\ldots,b_n$ of $L$ satisfying
\[
f(t_1,\ldots,t_n) = \prod_{\sigma\in \Gal(L/K)} \sigma\left(\sum_{i}
  t_ib_i\right)
\]
for any $t_1,\ldots,t_n\in K$.

\begin{proof}[Proof of Theorem \ref{thm:norootsof1}]
Assume $\ell$ is a prime number, that $K$ is a global field with
$\ch(K)\neq \ell$, and that $K$ contains $\mu_{\ell}$, the primitive $\ell$th
roots of unity. Set $d=\binom{2\ell-1}{\ell}$ and 
\[
D:=\{(x,y)\in K^{\times}\times K^{\times} : x \text{ is not a norm of }
K(\sqrt[\ell]{y})/K\}.
\]
Consider a cyclic
extension $L=K(y^{1/\ell})$ of $K$ for $y\in K^{\times}$,
and the $K$-basis  $\{y^{(i-1)/\ell}\}$,
$i=1,\ldots,\ell$, of $L$. Then every other basis of $L/K$ is of the form 
\[
w_i = \sum_{j} b_{ij}y^{(j-1)/\ell}
\]
for some matrix $(b_{ij})\in \GL_{\ell}(K)$. Then
\[
N_{L/K}(t_1,\ldots,t_{\ell}) = N_{L/K}\left(\sum_i t_i\left(\sum_j b_{ij}y^{(j-1)/\ell}\right)\right),
\]
as a polynomial in the variables $t_i$, has coefficients $f_s\in K[y,b_{11},\ldots,b_{\ell\ell}]$.
 Thus 
\begin{align*}
(x,a_1,\ldots,a_d)\in D(\ell,K) \iff  &\exists b_{ij},y\in K \text{ for
  } i,j=1,\ldots,\ell \text{ such that } \\
  &\bullet  f_s(y,b_{11},\ldots,b_{\ell\ell}) = a_s, \\
 &\bullet (b_{ij})\in \GL_{\ell}(K), \text{ and }  
\\ &\bullet (x,y)\in D. 
\end{align*}
Thus $D(\ell,K)$ is diophantine over $K$, because each condition in the
above list defines a diophantine set over $K$. The first two are
clearly diophantine, and the third is by Theorem \ref{thm:nonnormprime} if
$\ell$ is odd, and ny Theorem~1.3 of~\cite{EM17} if $\ell=2$. This proves the theorem
in the case that $K$ contains $\mu_{\ell}$, the $\ell$th roots of unity.

Now suppose $K$ does not contain the $\ell$th roots of unity. Let
$\omega\in \overline{K}$ be a primitive $\ell$th root of unity in an
algebraic closure $\overline{K}/K$, and set
$M:=K(\omega)$. If $L/K$ is a cyclic extension of degree $\ell$, a basis of
$L/K$ is also a basis of $ML/M$, since $M$
 and $L$ are both Galois over $K$ and $M\cap L = K$. 
Thus a norm form for $L/K$ is also a norm form of $ML/M$ by viewing
the variables as ranging over $M$ rather than $K$. 

We now show that $D(\ell,K)$ is diophantine over $K$. We have that
$(x,\vec{a})\in D(\ell,K)$ if and only if there is a cyclic extension
$L/K$ such that $f_{\vec{a}}$ is a norm form of $L/K$, and if $\sigma$
generates $\Gal(L/K)$, the cyclic algebra $(\sigma,x)$ is not
split. Let $\tau$ be a generator of $\Gal(ML/M)$ with
$\tau|_{L}=\sigma$. Since the map
\begin{align*}
Br(K)&\to Br(M) \\ 
[A] &\mapsto [A\otimes_K L]
\end{align*}
is multiplication by $[M:K]=\ell-1$ on the level of Brauer classes, and since 
\[
(\sigma,x)\otimes_K M \simeq (\tau,x)
\]
we have that $(\sigma,x)$ is split if and only if $(\tau,x)$ is split,
as $\ell-1$ is coprime to $\ell$. Thus $(x,\vec{a})\in D(\ell,K)$
 if and only if $(x,\vec{a})\in D(\ell,M)$, which is diophantine over 
$M$. Let $g\in M[s,t_1,\ldots,t_d,,u_1\ldots,u_m]$ be a polynomial which 
gives a diophantine definition of $D(\ell,M)$ over $M$, meaning
 \[
(x',a_1',\ldots,a_d')\in M\times M^d \iff \exists r_1,\ldots,r_m\in M \text{ s.t. }
f(x',a_1',\ldots,a_d',r_1,\ldots,r_m)=0. 
\]
Such a polynomial exists by our proof of the theorem above for global
fields containing the $\ell$th roots of unity. Write the coefficients
of $f$ as $K$-linear combinations in $\omega^i,i=0,\ldots,\ell-1$ and let
$f_i\in K[s,t_1,\ldots,t_{d},u_1,\ldots,u_m]$ be the polynomial
which is the coefficient of $\omega^i$ in $f$. Then $f_0,\ldots,f_{\ell-1}$
give a diophantine definition of $D(\ell,K)$ over $K$. This proves the
theorem for a global field $K$ with $\ch(K)\neq \ell$, where $n=\ell$
is a prime.

Finally, if $n$ is square-free, $K$ is a global field with
$(\ch(K),n)=1$, we can reduce Theorem \ref{thm:norootsof1}
 to the case that
$n$ is prime just as we did in the proof of Corollary
\ref{cor:nonnormsqfree}. Let $n=\ell_1\cdots \ell_r$ be the
prime factorization of $n$; then cyclic extensions $L/K$ of degree $n$
are the compositum of cyclic extensions $L_i/K$ of degree $\ell_i$. 
An element $x$ of $K$ is not a norm of $L/K$ if and only if $x$ is not
a norm of $L_i/K$ for some $i$, $1\leq i \leq r$. Hence 
\[
D(n,K) = \bigcup_{i=1}^r D(\ell_i,K),
\]
and we see that $D(n,K)$ is diophantine over $K$ because each
$D(\ell_i,K)$ is diophantine over $K$ by the above argument for
$n=\ell$ prime. This finishes the proof of the theorem.
\end{proof}

\section*{Acknowledgements}
I would like to thank Philip Dittman for pointing out a
  mistake in a previous version of this work, and for his comments and
  suggestions. I would also like to thank the anonymous referees for
  their careful reading, corrections, and suggestions. 

\end{document}